\documentclass[reqno,11pt]{amsart}
\usepackage{a4}
\usepackage{latexsym}
\usepackage{fullpage}
\usepackage{amssymb}
\usepackage{amscd}
\usepackage{hyperref}
\sloppy

\numberwithin{equation}{section}
\swapnumbers
\theoremstyle{plain}
\newtheorem{Lemma}[equation]{Lemma}
\newtheorem{Proposition}[equation]{Proposition}
\newtheorem{Corollary}[equation]{Corollary}

\newtheorem{Theorem}[equation]{Theorem}

\theoremstyle{definition}
\newtheorem{Definition}[equation]{Definition}

\theoremstyle{remark}
\newtheorem*{rem}{Remark}
\newtheorem{Remark}[equation]{Remark}

\newcommand\R{\ensuremath{\mathbb R}}
\newcommand\C{\ensuremath{\mathbb C}}
\newcommand\Q{\ensuremath{\mathbb Q}}
\newcommand\Z{\ensuremath{\mathbb Z}}

\newcommand\B{\ensuremath{\mathcal B}}
\renewcommand\P{\ensuremath{\mathcal P}}
\newcommand\Bhat{\widehat{\B}}
\newcommand\Phat{\widehat{\P}}
\newcommand\FThat{\widehat{\FT}}
\newcommand\SL{SL}
\newcommand\FT{\mathrm{FT}}

\newcommand\Lhat{\ensuremath{\widehat L}}
\newcommand\Chat{\ensuremath{\widehat{\mathbb C}}}
\newcommand\chihat{\ensuremath{\widehat\chi}}
\newcommand\chat{\ensuremath{\hat c}}
\newcommand\kaphat{\ensuremath{\widehat\kappa}}
\newcommand\lamhat{\ensuremath{\widehat\lambda}}
\newcommand\pihat{\ensuremath{\widehat\pi}}

\newcommand\Li{\operatorname{Li}}
\newcommand\Log{\operatorname{Log}}
\newcommand\Arg{\operatorname{Arg}}
\renewcommand\Re{\operatorname{Re}}
\renewcommand\Im{\operatorname{Im}}
\newcommand\punkt{\mathord{\,\cdot\,}}
\newcommand\im{\operatorname{im}}

\newcommand\cut{{\mathrm{cut}}}

\def\abs#1{\left|#1\right|}
\let\<\langle
\let\>\rangle
\newcommand\Hom{\operatorname{Hom}}


\begin{document}

\title{The Extended Bloch Group\\and the Cheeger-Chern-Simons Class}
\author{Sebastian Goette}
\address{Mathematisches Institut, Universit\"at Freiburg,
79104 Freiburg, Germany}
\email{sebastian.goette@math.uni-freiburg.de}
\thanks{First author supported in part by DFG special programme
``Global Differential Geometry''}
\author{Christian Zickert}
\address{Department of Mathematics, Columbia University, New York,
NY 10027, USA}
\email{zickert@math.columbia.edu}
\thanks{Second author supported by ``Rejselegat for matematikere''}
\keywords{Extended Bloch group, Cheeger-Chern-Simons class, Rogers dilogarithm}
\subjclass[2000]{57R20 (Primary) 11G55 (Secondary)}
\begin{abstract} 
  We present a formula for the full Cheeger-Chern-Simons class
  of the tautological flat complex vector bundle of rank~$2$
  over~$B\SL(2,\C^\delta)$.
  Our formula improves the formula in [DZ],
  where the class is only computed modulo $2$-torsion. 
\end{abstract}

\maketitle

\section*{Introduction}
The Cheeger-Chern-Simons class~$\chat_k$ is a natural refinement of
the $k$-th Chern class for complex vector bundles with connection,
and it takes values in the ring of differential characters,
see~\cite{CS1}, \cite{CS2}.
For a vector bundle with a flat connection,
this class becomes an ordinary $(2k-1)$-cohomology class
with coefficients in~$\C/\Z(k)$, where~$\Z(k)=(2\pi i)^k\,\Z$.
Let~$B\SL(n,\C^\delta)$ denote the classifying space
of the group~$\SL(n,\C^\delta)$
with the discrete topology.
The universal Cheeger-Chern-Simons
class~$\chat_k\in H^{2k-1}(B\SL(n,\C^\delta),\C/\Z(k))$ of the tautological
flat complex vector bundle over~$B\SL(n,\C^\delta)$
gives rise to the Borel regulator in algebraic $K$-theory,
and~$\chat_2$ is also related to invariants of hyperbolic $3$-manifolds.
One is interested in a combinatorial description
of this
class~$\chat_k\in H^{2k-1}(B\SL(n,\C^\delta),\C/\Z(k))$.
Dupont derived an expression
for~$\chat_2\in H^3(BSL(2,\C^\delta),\C/\Z(2))$
modulo~$\Q(2)$ in~\cite{Du}.
A similar formula for~$\Re\hat c_3$ is due to Goncharov,
see~\cite{Goncharov}.

The homology of the classifying space of a discrete group is by definition
the homology of the group, and since~$\C/\Z(2)$ is divisible  we can regard~$\chat_2$ as a homomorphism~$H_3(SL(2,\C))\to \C/\Z(2)$. The natural map~$H_3(SL(2,\C))\to H_3(PSL(2,\C))$ has cyclic kernel of order~$4$, so we have a commutative diagram defining~$\chat_2$ on~$H_3(PSL(2,\C))$,
\begin{equation*}
  \begin{CD}
    H_3(\SL(2,\C^\delta))@>\chat_2>>\C/\Z(2)\\
    @VVV@VVV\\
    H_3(P\SL(2,\C^\delta))@>\chat_2>>\C/\pi^2\Z&\;.
  \end{CD}
\end{equation*}
An explicit formula for the lower map was obtained in~\cite{N},
and in~\cite{DZ} this was extended to a formula for the upper map.
However, the formula given in~\cite{DZ} only computes the image of~$\chat_2$
in~$\C/2\pi^2\Z$, thus only computing~$\chat_2$ up to $2$-torsion
(see Remark 4.2 in~\cite{DZ} for a comment on the normalization).
In the present paper we extend the result in~\cite{DZ} obtaining
a formula computing the full Cheeger-Chern-Simons class. 

In both~\cite{N} and~\cite{DZ} the formulas are obtained by factoring~$\chat_2$
through a version of the \emph{extended Bloch group},
an object defined by Neumann in~\cite{N}.
There are two different versions of the extended Bloch group.
One version, denoted~$\widehat \B(\C)$ in Neumann's paper,
is generated by symbols~$[z;p,q]$ subject to a five term relation \emph{and}
a transfer relation.
It is isomorphic to~$H_3(P\SL(2,\C^\delta))$.
The other one, denoted~$\mathcal E\B(\C)$ is generated by symbols~$[z;2p,2q]$
and \emph{only} subject to the five term relation.
The latter version is called the \emph{more extended Bloch group},
and conjectured to be isomorphic to~$H_3(\SL(2,\C^\delta))$ in~\cite{N}.
The role of the transfer relation is subtle and has caused some minor
inaccuracies in~\cite{N} and~\cite{DZ},
see Remark~\ref{TransferRemark} below.
Proposition~8.2 and Corollary~8.3 in~\cite{N} are only correct
if we include the transfer relation.
Proposition~8.2 has been used in the proof of Proposition~4.15 in~\cite{DZ},
so this result is also only correct if we include the transfer relation.
To the best of our knowledge these are the only problems in~\cite{N}
and~\cite{DZ}.
We present corrections to these results as Theorem~\ref{ChihatLemma},
Corollaries~\ref{ExtBlochCor} and~\ref{MainCor},
and Remark~\ref{PSLRem} below. 

To obtain the full class~$\chat_2$,
we construct a lift
of the function~$\widehat L\colon\Phat(\C)\to\C/2\pi^2\Z$
defined in~\cite{DZ}
with values in~$\C/4\pi^2\Z=\C/\Z(2)$.
Note that this lift is not compatible with the transfer relation,
see~\ref{TransferRemark}.
This observation was the main motivation for the present note.

The paper is organized as follows:
In Section~\ref{ExtBlochSect} we recall the definition of the extended Bloch
group.
In Section~\ref{RogersSect} we define the modified extended Rogers
dilogarithm.
In Section~\ref{RelSect} we work out some relations
in the extended Bloch group,
and in Section~\ref{CCSSect} we prove our main results that the extended
Bloch group is isomorphic to~$H_3(\SL(2,\C^\delta))$ as conjectured
in~\cite{N},
and that~$\Lhat$
computes the Cheeger-Chern-Simons class.
In Section~\ref{MoreSect},
we have added some more relations in~$\Phat(\C)$
that might be of interest elsewhere.

\begin{rem}
  In the present paper we denote the more extended Bloch
  group~$\Bhat(\C)$ instead of~$\mathcal E\B(\C)$
  and refer to it as the extended Bloch group.
  This is consistent with the notation in~\cite{DZ}.
  Neumann's original extended Bloch group will be named~$\Bhat_N(\C)$.
\end{rem}

We are grateful to J. Dupont, G. Kings and W. Neumann
for fruitful discussions and their interest in our work.
Parts of this note were written
while the  first named author enjoyed the hospitality of the Chern Institute
at Tianjin.

\section{The Extended Bloch Group}\label{ExtBlochSect}
We follow the description in~\cite{N} at the beginning of chapter~8.
%
We first recall the construction of the classical Bloch group.
Define a set of five term relations
\begin{equation}\label{FTdef}
  \FT=\biggl\{\,\biggl(x,y,\frac yx,\frac{1-1/x}{1-1/y},\frac{1-x}{1-y}\biggr)
  \biggm|x\ne y\in\C\setminus\{0,1\}\,\biggr\}\subset(\C\setminus\{0,1\})^5\;.
\end{equation}
Consider the free abelian groups
generated by the elements of~$\FT$ and~$\C\setminus\{0,1\}$
and the chain complex
\begin{equation}\label{BlochComplex}
  \begin{CD}
    \Z[\FT]@>\rho>>\Z[\C\setminus\{0,1\}]@>\nu>>\C^\times\wedge_\Z\C^\times
  \end{CD}
\end{equation}
with arrows defined on generators by
\begin{align*}
  \rho([z_0,\dots,z_4])&=[z_0]-[z_1]+[z_2]-[z_3]+[z_4]\;,\\
  \nu([z])&=z\wedge(1-z)^{-1}\;.
\end{align*}
Then~$\P(\C)=\Z[\C\setminus\{0,1\}]/\im\rho$
is called the {\em pre-Bloch group,\/}
and the middle homology of the complex~\eqref{BlochComplex} above
is called the {\em Bloch group\/}~$\B(\C)$.

Let~$\Chat$ denote the universal abelian cover of~$\C\setminus\{0,1\}$.
To construct~$\Chat$,
we start with~$\C_\cut=\C\setminus((-\infty,0]\cup[1,\infty))$.
For each~$x\in(-\infty,0)\cup(1,\infty)$,
we add two boundary points
	$$x\pm 0i:=\lim_{t\searrow 0}x\pm ti$$
and put
	$$\overline\C_\cut=\C_\cut
		\cup\{\,x\pm 0i\mid x\in(-\infty,0)\cup(1,\infty)\,\}\;.$$
We extend the principal branches of~$\Log$,
$\Li_2$ and~$\Arg=\Im\Log$ to~$\overline\C_\cut$.

In~$\overline\C_\cut\times(2\Z)^2$,
identify
\begin{align}
  \begin{split}\label{ChatDef}
    (x+0i,2p,2q)&\sim(x-0i,2p+2,2q)
	\qquad\text{for all~$x\in(-\infty,0)$, and}\\
    (x+0i,2p,2q)&\sim(x-0i,2p,2q+2)
	\qquad\text{for all }x\in(1,\infty)
  \end{split}
\end{align}
for all~$p$, $q\in\Z$,
obtaining~$\pihat\colon\Chat\to\C\setminus\{0,1\}$.
Equivalence classes will be denoted~$(x;2p,2q)$ or simply~$\widehat x$.
Note that we could have dropped the factors~$2$ above
and worked in~$\overline\C_\cut\times\Z^2$ instead,
but we want to stay compatible with~\cite{DZ} and~\cite{N}.

As in~\cite{N}, put
\begin{equation*}
  \FT^+=\bigl\{\,(z_0,\dots,z_4)\in \FT\bigm|\Im z_0,\dots,\Im z_4>0\,\}\;.
\end{equation*}
Then~$(z_0,\dots,z_4)\in \FT^+$ iff~$\Im z_1>0$
and~$z_0$ is in the interior of the Euclidean triangle
spanned by~$0$, $1$ and~$z_1$.
Let~$\FThat$ denote the connected component of~$\pihat^{-1}(\FT)\subset\Chat^5$
that contains
	$$\FThat^+=\bigl\{\,((z_0;0,0),\dots,(z_4;0,0))\bigm|
		(z_0,\dots,z_4)\in \FT^+\,\bigr\}\;.$$

Also, note that the functions
	$$(z;2p,2q)\longmapsto(\Log z+2\pi i\,p)
		\qquad\text{and}\qquad
	(z;2p,2q)\longmapsto(-\Log(1-z)+2\pi i\,q)$$
are holomorphic on~$\Chat$.
By~\cite{DZ},
we can extend~\eqref{BlochComplex} to a chain complex
\begin{equation}\label{ExtBlochComplex}
  \begin{CD}
    \Z\bigl[\FThat\bigr]@>\widehat\rho>>\Z\bigl[\Chat\bigr]
    @>\widehat\nu>>\C\wedge_\Z\C
  \end{CD}
\end{equation}
with arrows defined on generators by
\begin{align*}
  \widehat\rho([\widehat z_0,\dots,\widehat z_4])
  &=[\widehat z_0]-[\widehat z_1]+[\widehat z_2]-[\widehat z_3]+[\widehat z_4]\;,\\
  \widehat\nu([z;2p,2q])
  &=(\Log z+2\pi i\,p)\wedge(-\Log(1-z)+2\pi i\,q)\;.
\end{align*}

\begin{Definition}\label{ExtBlochDef}
  The {\em extended pre-Bloch group\/}~$\Phat(\C)$
  is defined as the quotient~$\Z[\Chat]/\im\widehat\rho$,
  and the {\em extended Bloch group\/}~$\Bhat(\C)$
  as the middle homology of the complex~\eqref{ExtBlochComplex}.
\end{Definition}

\section{The Extended Rogers Dilogarithm}\label{RogersSect}
The classical dilogarithm is given by
	$$\Li_2(z)=\sum_{k=1}^\infty\frac{z^k}{k^2}
	=-\int_0^z\log(1-t)\,\frac{dt}t$$
for all~$z$ with~$\abs z<1$.
It extends to a multivalued function on~$\C$
with branch points at~$0$, $1$ and~$\infty$.
Recall that the Rogers dilogarithm~$L\colon(0,1)\to\R$ is given by
	$$L(x)=\Li_2(x)+\frac12\,\log x\,\log(1-x)-\frac{\pi^2}6\;.$$
We extend~$L$ to a holomorphic function
\begin{equation}\label{LhatDef}
  \begin{gathered}
    \overline L\colon\overline\C_\cut\times(2\Z)^2\longrightarrow\C\;,\\
    \overline L(z;2p,2q)=\Li_2(z)+\frac12\,\bigl(\Log z+2\pi i\,p\bigr)
	\,\bigl(\Log(1-z)+2\pi i\,q\bigr)-\frac{\pi^2}6\;.
  \end{gathered}
\end{equation}

\begin{Lemma}\label{RogersLemma}
  The function~$\overline L$ above induces a holomorphic function
	$$\Lhat\colon\Chat\to\C/\Z(2)$$
  that satisfies the five term relation
	$$\sum_{k=0}^4(-1)^k\Lhat(\widehat z_k)=0$$
  for all~$(\widehat z_0,\dots,\widehat z_4)\in\FThat$.
\end{Lemma}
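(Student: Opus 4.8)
The plan is to verify the two assertions separately. First, I would check that $\overline L$ is compatible with the identifications in \eqref{ChatDef}, so that it descends to a well-defined holomorphic map $\Chat\to\C/\Z(2)$. On the cut $(-\infty,0)$ we must compare $\overline L(x+0i,2p,2q)$ with $\overline L(x-0i,2p+2,2q)$. Crossing the cut $(-\infty,0)$ changes $\Li_2$ by its monodromy around $0$ and changes the principal branch of $\Log z$ by $-2\pi i$ (while $\Log(1-z)$ stays continuous); the shift $p\mapsto p+1$ exactly compensates the $\Log z$ jump, and the remaining discrepancy lies in $\Z(2)=(2\pi i)^2\Z$, hence vanishes in the quotient. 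The analogous computation on $(1,\infty)$ uses the monodromy of $\Li_2$ around $1$ and the $-2\pi i$ jump of $\Log(1-z)$, compensated by $q\mapsto q+1$. I expect both checks to reduce to the classical monodromy formulas for $\Li_2$, namely that analytic continuation around $0$ adds a multiple of $2\pi i\,\Log(1-z)$ (up to $\Z(2)$) and around $1$ adds a multiple of $-2\pi i\,\Log z$; the bookkeeping of which $2\pi i$ factors survive modulo $(2\pi i)^2$ is the one place where care is needed.

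Second, I would prove the five term relation on $\FThat$. The strategy is to use that $\FThat$ is \emph{connected} (by construction it is the component of $\pihat^{-1}(\FT)$ containing $\FThat^+$) together with the fact that $\sum_{k=0}^4(-1)^k\Lhat(\widehat z_k)$ is a holomorphic function of $(\widehat z_0,\dots,\widehat z_4)\in\FThat$ with values in the discrete group $\C/\Z(2)$ --- wait, it is not a priori discrete-valued, so instead: the expression $\sum_{k=0}^4(-1)^k\Lhat(\widehat z_k)$ is holomorphic on the connected complex manifold $\FThat$, and I claim its differential vanishes identically. Indeed $d\Lhat$ at $(z;2p,2q)$ equals $-\tfrac12\bigl((\Log(1-z)+2\pi i q)\,d\Log z-(\Log z+2\pi i p)\,d\Log(1-z)\bigr)$ (the standard computation: $d\Li_2(z)=-\Log(1-z)\,\tfrac{dz}{z}$, and the elementary terms combine), and the alternating sum of these one-forms over a five term tuple vanishes because the underlying relation $\sum(-1)^k\,\log z_k\wedge\log(1-z_k)=0$ already holds for $\FT$ in $\C\wedge_\Z\C$ --- this is exactly the statement that $\widehat\nu\circ\widehat\rho=0$, which holds because \eqref{ExtBlochComplex} is a chain complex by the cited result of \cite{DZ}. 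Hence the alternating sum is locally constant on the connected set $\FThat$, so it suffices to evaluate it at one point.

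For that evaluation I would pick a point of $\FThat^+$, i.e. a tuple $((z_0;0,0),\dots,(z_4;0,0))$ with $(z_0,\dots,z_4)\in\FT^+$. There all the integer parameters vanish and $\overline L$ reduces to $\Li_2(z_k)+\tfrac12\Log z_k\,\Log(1-z_k)-\tfrac{\pi^2}6$, which on the relevant region is a known single-valued branch; the five term relation $\sum(-1)^k L(z_k)=0$ for this branch is the classical Abel/Rogers identity (valid precisely because the $z_k$ satisfy the $\FT$ relations and all lie in the upper half plane, so no branch ambiguities arise). Concluding, the alternating sum vanishes at one point of $\FThat$, and being locally constant on the connected manifold $\FThat$, it vanishes everywhere.

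The main obstacle is the first part: getting the monodromy bookkeeping exactly right so that the discrepancies land in $\Z(2)=(2\pi i)^2\Z$ rather than merely in $2\pi i\,\Z$ --- this is the whole point of working modulo $\Z(2)$ instead of modulo $2\pi^2\Z$, and it is where the choice of the factor $2$ in $(2\Z)^2$ in \eqref{ChatDef} interacts with the $\tfrac12$ in the definition of $\overline L$. The five term relation, by contrast, is essentially forced once one knows $\FThat$ is connected and that $d\Lhat$ has the stated primitive form.
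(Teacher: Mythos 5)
Your overall route is the paper's: check that $\overline L$ descends through the identifications \eqref{ChatDef} modulo $\Z(2)$, then use connectedness of $\FThat$ to propagate a five term identity known on $\FThat^+$. For the five term relation the paper simply quotes \cite{N} for the exact identity $\sum_{k=0}^4(-1)^k\overline L(\widehat z_k)=0$ on $\FThat^+$ and invokes analytic continuation; your variant (show the alternating sum has vanishing differential, then evaluate at one point of $\FThat^+$) is workable, and your base-case evaluation is exactly the statement the paper cites --- but note the constant: the alternating sum of the five constants $-\tfrac{\pi^2}6$ is $-\tfrac{\pi^2}6$, so the ``classical'' identity must be invoked in precisely this normalization, which is what \cite{N} provides.

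Three points need repair. First, the principal branch of $\Li_2$ has its only cut along $[1,\infty)$ and is regular at $0$, so across $(-\infty,0)$ neither $\Li_2$ nor $\Log(1-z)$ jumps; only $\Log z$ jumps by $2\pi i$, which the shift $p\mapsto p+1$ absorbs exactly, leaving no discrepancy at all. Your monodromy rule ``continuation around $0$ adds a multiple of $2\pi i\,\Log(1-z)$'' is false (circling $0$ changes non-principal branches only by multiples of $(2\pi i)^2$; the $2\pi i\Log z$ change comes from circling $1$), and taken literally it would leave a discrepancy not lying in $\Z(2)$. Second, the computation that carries the whole content of the lemma --- the cut $(1,\infty)$ --- is only promised, not done: there $\Li_2(z+0i)=\Li_2(z-0i)+2\pi i\Log z$ and $\Log(1-(z+0i))=\Log(1-(z-0i))-2\pi i$, whence $\overline L(z+0i;2p,2q)=\overline L(z-0i;2p,2q+2)+4\pi^2p$; it is the evenness of the first parameter (together with the factor $\tfrac12$ in \eqref{LhatDef}) that makes this leftover a multiple of $(2\pi i)^2$ rather than merely of $2\pi^2$, and this must be written out. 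Third, in your differential argument the $q$-term has the wrong sign: the correct formula is $d\overline L=\tfrac12\bigl((-\Log(1-z)+2\pi i q)\,d\Log z+(\Log z+2\pi i p)\,d\Log(1-z)\bigr)$, i.e.\ $\tfrac12(\ell_2\,d\ell_1-\ell_1\,d\ell_2)$ with $\ell_1=\Log z+2\pi ip$, $\ell_2=-\Log(1-z)+2\pi iq$; and, more importantly, $\widehat\nu\circ\widehat\rho=0$ is a pointwise identity in $\C\wedge_\Z\C$, which does not by itself yield the identity of $1$-forms. To get the latter you need the linear relations among the branches $\ell_1^{(k)},\ell_2^{(k)}$ as functions on $\FThat$, which is what the parametrization \eqref{FThatEq} supplies (and is how \cite{DZ} proves the chain-complex property in the first place). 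With that input your computation goes through, but as stated the inference is a gap; the paper's shortcut of quoting \cite{N} on $\FThat^+$ avoids it entirely.
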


Note that~$\Lhat$ lifts the function~$\Lhat$ in~\cite{DZ}
from~$\C/2\pi^2\Z$ to~$\C/4\pi^2\Z=\C/\Z(2)$.

\begin{proof}
  Because~$\Li_2(z)$ has no singularity at~$z=0$
  and~$(z;p,q)\mapsto\Log z+2\pi i p$ is holomorphic on~$\Chat$,
  the function~$\overline L$ extends holomorphically
  across~$(-\infty,0)\times(2\Z)^2$ in~$\Chat$.
  
  If we extend~$\Li_2$ and~$\Log(1-z)$ to~$z\pm 0i$ for~$z>1$, then
  \begin{align*}
    \Li_2(z+0i)&=\Li_2(z-0i)+2\pi i\,\Log z\\
    \Log(1-(z+0i))
    &=\Log(1-(z-0i))-2\pi i\;.
  \end{align*}
  Hence
  \begin{align*}
    \overline L(z+0i;2p,2q)
    &=\overline L(z-0i;2p,2q+2)+4\pi^2\,p\;,
  \end{align*}
  so the extension~$\Lhat$ of~$\overline L$ mod~$4\pi^2$ is well-defined.

  By~\cite{N}, we have a five term relation
	$$\sum_{k=0}^4(-1)^k\overline L(\widehat z_k)=0\in\C$$
  for all~$(\widehat z_0,\dots,\widehat z_4)\in\FThat^+$.
  Because~$\FThat$ is a connected complex manifold,
  the five term relation for~$\Lhat$
  holds in~$\C/\Z(2)$ for all~$(\widehat z_0,\dots,\widehat z_4)\in\FThat$
  by analytic continuation.
\end{proof}

\begin{Remark}
  Along the commutator of a small loop
  around~$0$ and a small loop around~$1$ in~$\C\setminus\{0,1\}$,
  the holomorphic continuation of the Rogers dilogarithm
  changes by~$4\pi^2$.
  This shows that we cannot lift~$L$ to a holomorphic function on~$\Chat$
  with values in~$\C/A$, with~$A\subset\Z(2)$ a proper subgroup.
\end{Remark}

\begin{Corollary}
  The function~$\Lhat$ induces homomorphisms
	$$\Lhat\colon\Phat(\C)\to\C/\Z(2)
		\qquad\text{and}\qquad
	\Lhat\colon\Bhat(\C)\to\C/\Z(2)\;.$$
\end{Corollary}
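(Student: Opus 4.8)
The plan is to deduce the Corollary directly from Lemma~\ref{RogersLemma} together with the definitions of~$\Phat(\C)$ and~$\Bhat(\C)$ given in Definition~\ref{ExtBlochDef}. Since~$\Phat(\C)=\Z[\Chat]/\im\widehat\rho$, a homomorphism~$\Phat(\C)\to\C/\Z(2)$ is the same thing as a homomorphism~$\Z[\Chat]\to\C/\Z(2)$ that vanishes on~$\im\widehat\rho$. First I would use the universal property of the free abelian group~$\Z[\Chat]$: the set map~$\Lhat\colon\Chat\to\C/\Z(2)$ from Lemma~\ref{RogersLemma} extends uniquely to a group homomorphism~$\Z[\Chat]\to\C/\Z(2)$, sending~$\sum_i n_i[\widehat z_i]$ to~$\sum_i n_i\Lhat(\widehat z_i)$.

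Next I would check that this homomorphism kills~$\im\widehat\rho$. It suffices to check it on the generators~$\widehat\rho([\widehat z_0,\dots,\widehat z_4])=\sum_{k=0}^4(-1)^k[\widehat z_k]$ for~$(\widehat z_0,\dots,\widehat z_4)\in\FThat$. Applying the induced homomorphism gives~$\sum_{k=0}^4(-1)^k\Lhat(\widehat z_k)$, which is zero in~$\C/\Z(2)$ precisely by the five term relation in Lemma~\ref{RogersLemma}. Hence the homomorphism descends to the quotient~$\Phat(\C)=\Z[\Chat]/\im\widehat\rho$, giving the first asserted map~$\Lhat\colon\Phat(\C)\to\C/\Z(2)$.

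For the second map, I would recall that~$\Bhat(\C)$ is the middle homology of the complex~\eqref{ExtBlochComplex}, i.e.\ $\Bhat(\C)=\ker\widehat\nu/\im\widehat\rho$, so it is a subquotient of~$\Phat(\C)$: concretely,~$\Bhat(\C)=\ker\bigl(\Phat(\C)\xrightarrow{\widehat\nu}\C\wedge_\Z\C\bigr)$, using that~$\widehat\nu$ factors through~$\Phat(\C)$ because~$\widehat\nu\circ\widehat\rho=0$ (this is exactly the statement that~\eqref{ExtBlochComplex} is a chain complex, due to~\cite{DZ}). I would then simply restrict the homomorphism~$\Lhat\colon\Phat(\C)\to\C/\Z(2)$ already constructed to this subgroup, obtaining~$\Lhat\colon\Bhat(\C)\to\C/\Z(2)$.

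There is no real obstacle here; the Corollary is a formal consequence of Lemma~\ref{RogersLemma} and the definitions, and the only point requiring a word of care is the observation that~$\widehat\nu$ descends to~$\Phat(\C)$, so that~$\Bhat(\C)$ is genuinely a subgroup of~$\Phat(\C)$ and the restriction makes sense. Everything else is an application of the universal property of free abelian groups and of the universal property of a quotient.
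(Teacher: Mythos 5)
Your argument is correct and is exactly the formal verification the paper leaves implicit: the paper states this Corollary without proof as an immediate consequence of Lemma~\ref{RogersLemma}, since $\Lhat$ extends linearly to $\Z[\Chat]$ and kills $\im\widehat\rho$ by the five term relation, then restricts to $\Bhat(\C)\subset\Phat(\C)$. Your added remark that $\widehat\nu$ descends because \eqref{ExtBlochComplex} is a chain complex is the right point of care and matches the paper's setup.
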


\section{Relations in the Extended Bloch Group}\label{RelSect}
Following~\cite{N},
we find relations among elements of the extended pre-Bloch group.
We then parametrize the kernel of the forgetful maps~$\Phat(\C)\to\P(\C)$
and~$\Bhat(\C)\to\B(\C)$ induced by the projection~$\pihat\colon\Chat\to\C$.
In the following,
we will identify~$z\in(-\infty,0)\cup(1,\infty)$ with~$z+0i$.

As explained in the appendix of~\cite{DZ},
we have
\begin{multline}\label{FThatEq}
  \pihat^{-1}(\FT^+)\cap\FThat
  =\Bigl\{\,\Bigl(\bigl(z_0;2p_0,2q_0\bigr),\bigl(z_1;2p_1,2q_1\bigr),
	\bigl(z_2;2(p_1-p_0),2q_2\bigr),\\
	\bigl(z_3;2(p_1-p_0+q_1-q_0),2(q_2-q_1)\bigr),
	\bigl(z_4;2(q_1-q_0),2(q_2-q_1-p_0)\bigr)\Bigr)\\
  \Bigm|(z_0,\dots,z_4)\in \FT^+\text{ and }
	p_0,p_1,q_0,q_1,q_2\in\Z\,\Bigr\}\;.
\end{multline}
For other choices of~$(z_0,\dots,z_4)\in \FT$,
a few of the~$p_k$, $q_k$ have to be adjusted by~$\pm 1$.

Subtracting two instances of the five term relation and using~\eqref{FTdef}
and~\eqref{FThatEq},
we obtain Neumann's cycle relation
\begin{multline*}
  [x;2p_0,2q_0-2]-[y;2p_1,2q_1-2]
	+\Bigl[\frac yx;2p_1-2p_0,2q_2-2\Bigr]\\
  =[x;2p_0,2q_0]-[y;2p_1,2q_1]+\Bigl[\frac yx;2p_1-2p_0,2q_2\Bigr]
\end{multline*}
for all~$x$, $y$
such that~$\bigl(x,y,\frac yx,\frac{1-1/x}{1-1/y},\frac{1-x}{1-y}\bigr)
\in \FT^+$.
If we vary~$x$ and~$y$ continuously,
then some of the integers in this relation may jump.
Thus, we obtain
\begin{multline}\label{CycleRel}
  [x;2p_0,2q_0-2]-[x;2p_0,2q_0]-[y;2p_1,2q_1-2]+[y;2p_1,2q_1]\\
  =
  \begin{cases}
    \bigl[\frac yx;2p_1-2p_0-2,2q_2-2\bigr]
	-\bigl[\frac yx;2p_1-2p_0-2,2q_2\bigr]
		&\text{if }\Arg y-\Arg x\le-\pi\;,\\
    \bigl[\frac yx;2p_1-2p_0,2q_2-2\bigr]
	-\bigl[\frac yx;2p_1-2p_0,2q_2\bigr]
		&\text{if }-\pi<\Arg y-\Arg x\le\pi\;,\\
    \bigl[\frac yx;2p_1-2p_0+2,2q_2-2\bigr]
	-\bigl[\frac yx;2p_1-2p_0+2,2q_2\bigr]
		&\text{if }\pi<\Arg y-\Arg x\;.\\
  \end{cases}
\end{multline}

Subtracting two instances of~\eqref{CycleRel}
gives
\begin{equation}\label{QRel}
  [x;2p,2(q-1)]-[x;2p,2q]=[x;2p,2(q'-1)]-[x;2p,2q']
\end{equation}
for all~$x\in\overline\C_\cut$.
Similarly,
one can prove
\begin{align}\label{PRel}
  [x;2(p-1),2q]-[x;2p,2q]&=[x;2(p'-1),2q]-[x;2p',2q]\\\label{PQRel}
	\text{and}\qquad
  [x;2(p+1),2(q-1)]-[x;2p,2q]&=[x;2(p'+1),2(q'-1)]-[x;2p',2q']
\end{align}
for all~$x\in\overline\C_\cut$ and all~$p$, $q$, $p'$, $q'\in\Z$
such that~$p'+q'=p+q$.
See~\cite{N} for a more geometric derivation of these relations.

By Lemma~7.3 in~\cite{N},
we also have the relation
\begin{equation}\label{MirrorRel}
  [z;2p,2q]+[1-z;-2q,-2p]=2\,\biggl[\frac12;0,0\biggr]\;,
\end{equation}
which is of course compatible with~\eqref{QRel}--\eqref{PQRel}.

Let us define
	$$\{z;2p\}=[z;2p,2q]-[z;2p,2(q-1)]\;,$$
which is independent of~$q$ by~\eqref{QRel}.

\begin{Lemma}\label{HomoLemma}
  For all~$z$, $w\in\overline\C_\cut$ with~$zw\ne 1$,
  we have the relation
	$$\{z;2p\}+\{w;2r\}=
	\begin{cases}
	  \{zw+0i;2(p+r-1)\}	&\text{if~$\Arg z+\Arg w\le-\pi$,}\\
	  \{zw+0i;2(p+r)\}	&\text{if~$-\pi<\Arg z+\Arg w\le\pi$, and}\\
	  \{zw+0i;2(p+r+1)\}	&\text{if~$\pi<\Arg z+\Arg w$.}
	\end{cases}$$
\end{Lemma}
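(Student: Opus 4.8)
The plan is to recognize that the combination $\{z;2p\}$ encodes ``how $[z;2p,2q]$ varies with $q$'' and that this variation is governed by the logarithmic term $-\Log(1-z)+2\pi i q$ together with a shift coming from the $\Log z + 2\pi i p$ factor; the right-hand side of the claimed identity should therefore follow from the behaviour of $\Log$ under multiplication. Concretely, I would start from the cycle relation~\eqref{CycleRel}, which already expresses $[x;2p_0,2q_0-2]-[x;2p_0,2q_0]$ minus the corresponding difference for $y$ in terms of a difference for $y/x$, the case split being exactly by whether $\Arg y-\Arg x$ lies in $(-\pi,\pi]$ or outside. Rewriting~\eqref{CycleRel} in the notation $\{\punkt;2p\}$ and replacing $y/x$ by a new variable — i.e. setting $x\mapsto z$, $y/x\mapsto w$ so that $y = zw$ — turns~\eqref{CycleRel} into
\begin{equation*}
  \{z;2p_0\} - \{zw;2p_1\} + \{w;2(p_1-p_0)\} = 0
\end{equation*}
up to the integer shifts in the first slot dictated by $\Arg w = \Arg(y/x)$ versus $\Arg y - \Arg x$, which differ by a multiple of $2\pi$ according to the same three-way split. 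Relabelling $2r := 2(p_1-p_0)$, equivalently $p_1 = p_0 + r =: p + r$, rearranges this into the asserted formula $\{z;2p\} + \{w;2r\} = \{zw+0i;2(p+r+\varepsilon)\}$ with $\varepsilon\in\{-1,0,1\}$ determined by $\Arg z + \Arg w$.

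The steps, in order, are: (1) substitute $q_0 = q_0$, $q_1 = q_1$, $q_2 = q_2$ arbitrary into~\eqref{CycleRel} and use~\eqref{QRel} to replace each difference $[\punkt;2s,2(t-1)]-[\punkt;2s,2t]$ (after sign adjustment) by the corresponding $-\{\punkt;2s\}$; (2) perform the change of variables $x = z$, $w = y/x$, being careful that $z,w\in\overline\C_\cut$ and $zw\ne 1$ is exactly the condition under which $y = zw\in\overline\C_\cut$ makes sense and $(x,y,y/x,\dots)$ lands in (a translate of) $\FT^+$; (3) match the three cases: $\Arg y - \Arg x = \Arg(zw) - \Arg z$, and since $\Arg$ is the principal branch, $\Arg(zw) - \Arg z - \Arg w \in \{-2\pi, 0, 2\pi\}$ according to whether $\Arg z + \Arg w$ exceeds $\pi$, lies in $(-\pi,\pi]$, or is at most $-\pi$ — this is precisely the trichotomy in the cycle relation, transported through the identification $p_1 - p_0 \leftrightarrow r$; (4) read off that the integer appearing in the $y/x$-term of~\eqref{CycleRel} is $2(p_1 - p_0) + 2\delta$ with $\delta$ the same shift, i.e. $2(p + r) + 2\delta$, which after accounting for the direction of the identity gives $2(p + r + \varepsilon)$ with $\varepsilon = -\delta$ and hence the stated sign convention; (5) finally invoke~\eqref{QRel} once more to note the answer is independent of the auxiliary $q$'s, so the relation descends to the $\{\punkt;2\punkt\}$ notation and holds in $\Phat(\C)$.

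The main obstacle I anticipate is bookkeeping the integer shifts correctly — in particular making sure the case boundaries ``$\le -\pi$'', ``$-\pi < \cdot \le \pi$'', ``$\pi < \cdot$'' in the conclusion line up with those in~\eqref{CycleRel} after the substitution $\Arg y - \Arg x \rightsquigarrow \Arg(zw) - \Arg z$, rather than being shifted or reflected. A clean way to handle this is to treat the homomorphism property abstractly: the map $z\mapsto \{z;2p\}$ should be, up to the first-slot integer, a crossed homomorphism from $(\overline\C_\cut^\times, \cdot)$ into $\Phat(\C)$ whose ``cocycle'' is exactly the branch-cut defect of $\Arg$, and~\eqref{CycleRel} already is this cocycle identity in disguise. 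I would also double-check the degenerate configurations — when $zw$ lands on $(1,\infty)$ so that the $+0i$ in $zw+0i$ genuinely matters — by continuity from the generic case inside $\FT^+$, exactly as in the derivation of~\eqref{CycleRel} itself. Once the shift matching is pinned down, the rest is a direct rearrangement of~\eqref{CycleRel}.
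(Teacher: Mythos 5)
Your proposal is correct and is essentially the paper's own argument: the paper's proof consists of the single remark that the lemma is immediate from the cycle relation~\eqref{CycleRel}, and your substitution $x=z$, $y=zw+0i$ (so $y/x=w$) together with the bookkeeping of the $\Arg$-defect $\Arg(zw)-\Arg z-\Arg w\in\{-2\pi,0,2\pi\}$ is exactly that derivation written out. One small point: the sign with which you convert the $y/x$-difference into $\{w;\punkt\}$ matches the unnumbered form of the cycle relation (the one consistent with applying~$\Lhat$, and the one that yields the stated lemma) rather than the right-hand side of~\eqref{CycleRel} as literally printed, so your sign choice is the correct one.
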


\begin{proof}
  This is immediate from~\eqref{CycleRel}.
\end{proof}

\begin{Lemma}\label{KappaLemma}
  The element~$\kaphat=\{z;2p\}-\{z;2(p-1)\}\in\Phat(\C)$
  is independent of~$z\in\C\setminus\{0,1\}$ and~$p\in\Z$,
  and of order~$2$ in~$\Phat(\C)$.
\end{Lemma}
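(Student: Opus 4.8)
The plan is to use the relations already established—especially Lemma~\ref{HomoLemma} and the ``$P$'', ``$Q$'' and ``$PQ$'' relations \eqref{QRel}--\eqref{PQRel}—to show first that $\kaphat$ does not depend on the choices involved, and then to produce an explicit identity exhibiting $2\kaphat=0$.

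First I would address independence of $p$. Since $\{z;2p\}=[z;2p,2q]-[z;2p,2(q-1)]$, the difference $\{z;2p\}-\{z;2(p-1)\}$ is
\[
  \bigl([z;2p,2q]-[z;2(p-1),2q]\bigr)-\bigl([z;2p,2(q-1)]-[z;2(p-1),2(q-1)]\bigr),
\]
and applying \eqref{PRel} to each parenthesized term (which says such differences are independent of the common value of $p$, here with $q$ fixed) shows $\kaphat$ is unchanged if we replace $p$ by $p+1$; hence it is independent of $p\in\Z$.

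Next I would prove independence of $z$. The natural tool is Lemma~\ref{HomoLemma}: choosing $w$ near $1$ so that $\Arg z+\Arg w$ stays in $(-\pi,\pi]$ for a range of $z$, the relation $\{z;2p\}+\{w;2r\}=\{zw;2(p+r)\}$ shows that $\{\punkt;2p\}$ behaves like a ``logarithm'' and that $\kaphat=\{z;2p\}-\{z;2(p-1)\}$ transforms additively; taking the corresponding difference in the $z$-variable, the $w$-terms cancel and one gets $\kaphat(z)=\kaphat(zw)$ for $w$ near~$1$, so $\kaphat$ is locally constant in $z$ on $\overline\C_\cut$, hence constant. (One then checks the gluing relations \eqref{ChatDef} cause no jump; alternatively one uses Lemma~\ref{HomoLemma} with larger~$w$ and the case distinctions, the extra $\pm1$ in the $p$-slot being harmless since $\kaphat$ is $p$-independent.) Combining with the previous step, $\kaphat\in\Phat(\C)$ is a single well-defined element.

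Finally, for $2\kaphat=0$: by $p$-independence we may write $\kaphat=\{z;2\}-\{z;0\}$ and, telescoping, $2\kaphat=\{z;4\}-\{z;0\}$. The idea is to realize $\{z;4\}-\{z;0\}$ as a difference that is forced to vanish using the mirror relation \eqref{MirrorRel} together with \eqref{QRel}--\eqref{PQRel}. Concretely, apply \eqref{MirrorRel} to $[z;2p,2q]$ and to $[1-z;-2q,-2p]$: this ties the $p$-shift behaviour of $[z;\cdot\,,\cdot]$ to the $q$-shift behaviour of $[1-z;\cdot\,,\cdot]$, and since $\{z;2p\}$ is built from a $q$-shift, one obtains that the ``$p$-derivative of the $q$-shift'' and the ``$q$-derivative of the $p$-shift'' agree up to sign; chasing this through shows $2\kaphat=-2\kaphat$, i.e.\ $2\kaphat=0$. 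The main obstacle I expect is precisely this last step: keeping track of the signs and of the three-way $\Arg$ case distinctions in Lemma~\ref{HomoLemma} carefully enough to see that the order is exactly $2$ and not larger, and to confirm $\kaphat\ne 0$ is not needed here (the lemma only asserts order dividing $2$). If a direct manipulation of \eqref{MirrorRel} proves awkward, the fallback is Neumann's more geometric derivation referenced after \eqref{PQRel}, interpreting $\kaphat$ as the class of a degenerate flattened simplex and reading off its order from the combinatorics of the gluing \eqref{ChatDef}.
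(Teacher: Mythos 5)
Your handling of the two independence statements matches the paper (independence of $p$ via \eqref{PRel}, independence of $z$ via Lemma~\ref{HomoLemma}), but the torsion part of your argument has a genuine gap. The step you sketch is both unexecuted and logically off: a relation of the shape ``$2\kaphat=-2\kaphat$'' only yields $4\kaphat=0$, not $2\kaphat=0$; what one needs, and what the paper actually proves, is $\kaphat=-\kaphat$. The paper's computation is short and explicit: write $\kaphat=[z;2,0]-[z;2,-2]-[z;0,0]+[z;0,-2]$ using \eqref{QRel}, apply \eqref{MirrorRel} to each of the four terms, and regroup to get $-[1-z;0,-2]+[1-z;2,-2]+[1-z;0,0]-[1-z;2,0]=\{1-z;0\}-\{1-z;2\}=-\kaphat$, using the already established independence of $z$ and $p$. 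Your fallback of ``reading off the order from Neumann's geometric picture'' is also dangerous here, because Neumann works modulo the transfer relation, under which $\kaphat$ becomes zero; that framework cannot by itself certify the order of $\kaphat$ in $\Phat(\C)$.

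More seriously, you explicitly decline to prove $\kaphat\ne0$, claiming the lemma only asserts order dividing $2$. The lemma asserts order exactly $2$, and the nonvanishing is the essential content: it is precisely the point at which this paper corrects \cite{N} and \cite{DZ} (see Remark~\ref{TransferRemark}), and it is used later, e.g.\ in the definition \eqref{ChihatDef} of $\chihat$ and in Theorem~\ref{ChihatLemma}. The paper establishes it by evaluating the lifted Rogers dilogarithm, $\Lhat(\kaphat)=\Lhat(z;2,2)-\Lhat(z;2,0)-\Lhat(z;0,2)+\Lhat(z;0,0)=-2\pi^2$, which is nonzero in $\C/\Z(2)=\C/4\pi^2\Z$; note this cannot be detected by the $\C/2\pi^2\Z$-valued function of \cite{DZ}, so some computation with the lift of Section~\ref{RogersSect} (or an equivalent argument) is indispensable and must be added to your proof.
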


\begin{proof}
  Independence of~$p$ follows from~\eqref{PRel} and
  independence of~$z$ is immediate from Lemma~\ref{HomoLemma}.
  To proof that~$\kaphat$ is of order two,
  use~\eqref{QRel} and~\eqref{MirrorRel} to write
  \begin{align*}
    \kaphat
    &=[z;2,0]-[z;2,-2]-[z;0,0]+[z;0,-2]\\
    &=-[1-z;0,-2]+[1-z;2,-2]+[1-z;0,0]-[1-z;2,0]\\
    &=\{1-z;0\}-\{1-z;2\}=-\kaphat\;.
  \end{align*}
  To show that~$\kaphat\ne0$, we compute
  \begin{equation}\label{LhatKaphatFormel}
    \Lhat(\kaphat)
    =\Lhat(z;2,2)-\Lhat(z;2,0)-\Lhat(z;0,2)+\Lhat(z;0,0)
    =-2\pi^2\;,
  \end{equation}
  and~$-2\pi^2\ne0$ in~$\C/\Z(2)$.
\end{proof}

\begin{Remark}\label{TransferRemark}
  In~\cite{N}, Section~8, and~\cite{DZ}, Proposition~4.15,
  it has been assumed implicitly that~$\kaphat=0$.
  More precisely,
  Neumann introduces a ``transfer relation''
	$$[z;p,q]+[z;p',q']=[z;p,q']+[z;p,q']
	\qquad\text{for all~$p$, $q$, $p'$, }q'\in\Z$$
  in the definition of his extended Bloch group.
  The analogous relation in our context would read
	$$[z;2p,2q]+[z;2p',2q']=[z;2p,2q']+[z;2p,2q']
	\qquad\text{for all~$p$, $q$, $p'$, }q'\in\Z\;.$$
  In analogy with Proposition~7.2 in~\cite{N},
  one can show that the effect of the transfer relation above
  is equivalent to dividing by the subgroup of order two
  that is generated by~$\kaphat$.

  We have just computed~$\Lhat(\kaphat)=-2\pi^2\in\C/4\pi^2\Z$
  in~\eqref{LhatKaphatFormel}.
  This explains that if one includes the transfer relation,
  then~$\Lhat$ is well-defined only modulo~$2\pi^2\Z$ as in~\cite{DZ}.

  Assuming that~$\kaphat=0$,
  one finds that~$\{z;2p\}$ becomes independent of~$p$.
  This allows to define a homomorphism~$\C^\times\to\Phat(\C)/\<\kaphat\>$
  with~$z\mapsto\{z;0\}$, see~\cite{N}, Proposition~8.2.
\end{Remark}

Starting from Neumann's map,
we obtain a pullback square
\begin{equation*}
  \begin{CD}
    \C^\times@>\chihat>>\Phat(\C)\\
    @VVV@VVV\\
    \C^\times@>>>\Phat(\C)/\<\kaphat\>&\;.
  \end{CD}
\end{equation*}
Here the left vertical arrow maps~$z$ to~$z^2$,
and~$\chihat$ is given by
\begin{equation}\label{ChihatDef}
  \chihat(z)=
  \begin{cases}
	0		&\text{if~$z=1$,}\\
	\kaphat	&\text{if~$z=-1$,}\\
	\{z^2+0i;0\}	&\text{if~$\Arg z\in\bigl(-\frac\pi2,\frac\pi2\bigr]$
				and~$z\ne 1$, and}\\
	\{z^2+0i;2\}	&\text{if~$\Arg z
				\notin\bigl(-\frac\pi2,\frac\pi2\bigr]$
				and~$z\ne -1$.}\\
  \end{cases}
\end{equation}

\begin{Theorem}\label{ChihatLemma}
  The map~$\chihat$ is a homomorphism,
  and the sequence
  \begin{equation*}
    \begin{CD}
      0@>>>\C^\times@>\chihat>>\Phat(\C)@>>>\P(\C)@>>>0
    \end{CD}
  \end{equation*}
  is exact and split,
  where the right arrow
  is induced by~$\pihat\colon\Chat\to\C\setminus\{0,1\}$.
\end{Theorem}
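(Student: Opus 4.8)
\medskip
\noindent\textbf{Proof strategy.}
The plan is to check exactness separately at $\C^\times$, at $\Phat(\C)$, and at $\P(\C)$, and then to exhibit an explicit retraction of $\chihat$; the splitting would in any case follow for free, since $\C^\times$ is divisible and hence injective as a $\Z$-module. Exactness at $\P(\C)$ is immediate: each generator $[z]$ of $\Z[\C\setminus\{0,1\}]$ is the image of $[z;0,0]$, and $\pihat(\FThat)\subset\FT$, so $\pihat$ induces a surjection $\Phat(\C)\to\P(\C)$. The inclusion $\im\chihat\subset\ker\bigl(\Phat(\C)\to\P(\C)\bigr)$ is also easy: for $z\ne\pm1$ the element $\chihat(z)=[z^2+0i;\dots]-[z^2+0i;\dots]$ is a difference of two lifts of the single point $z^2\in\C\setminus\{0,1\}$, while $\chihat(1)=0$ and $\chihat(-1)=\kaphat$ are visibly killed by $\pihat$.

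To show that $\chihat$ is a homomorphism I would argue in two stages. First, work modulo $\kaphat$: both $\{z^2+0i;0\}$ and $\{z^2+0i;2\}$ have the same image in $\Phat(\C)/\<\kaphat\>$, and $\kaphat$ maps to $0$, so Lemma~\ref{HomoLemma} --- read modulo $\kaphat$, where it absorbs the integer correction in the second slot --- shows that the composite $\C^\times\to\Phat(\C)\to\Phat(\C)/\<\kaphat\>$ is a homomorphism; this is the corrected form of Proposition~8.2 in~\cite{N}. Hence the defect $d(z,w):=\chihat(zw)-\chihat(z)-\chihat(w)$ lies in $\<\kaphat\>=\{0,\kaphat\}$ by Lemma~\ref{KappaLemma}. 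To pin it down I would apply $\Lhat$. From~\eqref{LhatDef} one gets $\Lhat(\{u;2p\})=\pi i\,\Log u-2\pi^2p$ in $\C/\Z(2)$, and, using that $\Log(z^2+0i)=2\Log z+2\pi i\eps$ with $\eps=0$ for $\Arg z\in(-\tfrac\pi2,\tfrac\pi2]$, $\eps=-1$ for $\Arg z\in(\tfrac\pi2,\pi]$, and $\eps=+1$ for $\Arg z\in(-\pi,-\tfrac\pi2]$, a short case check gives
\[
\Lhat\bigl(\chihat(z)\bigr)=2\pi i\,\Log z\qquad\text{in }\C/\Z(2),\ \text{for every }z\in\C^\times .
\]
Since $\Log z+\Log w-\Log(zw)\in2\pi i\,\Z$ and $\Lhat$ is additive on $\Phat(\C)$, we get $\Lhat(d(z,w))\in(2\pi i)^2\Z=\Z(2)$, i.e.\ $\Lhat(d(z,w))=0$ in $\C/\Z(2)$, whereas $\Lhat(\kaphat)=-2\pi^2\ne0$ there by~\eqref{LhatKaphatFormel}; therefore $d(z,w)=0$.

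The displayed identity also settles exactness at $\C^\times$ and the splitting at once: the map $\phi\colon\C/\Z(2)\to\C^\times$, $\phi(v)=\exp\bigl(v/(2\pi i)\bigr)$, is a well-defined homomorphism (as $\exp(\Z(2)/(2\pi i))=\exp(2\pi i\,\Z)=1$), and $r:=\phi\circ\Lhat\colon\Phat(\C)\to\C^\times$ satisfies $r\circ\chihat=\id_{\C^\times}$. Thus $r$ is a retraction of $\chihat$; in particular $\chihat$ is injective, and the sequence splits.

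The remaining, and main, point is $\ker\bigl(\Phat(\C)\to\P(\C)\bigr)\subset\im\chihat$. Here I would run the standard diagram chase in
\[
\begin{CD}
\Z[\FThat]@>\widehat\rho>>\Z[\Chat]@>>>\Phat(\C)\\
@VVV@VVV@VVV\\
\Z[\FT]@>\rho>>\Z[\C\setminus\{0,1\}]@>>>\P(\C)
\end{CD}
\]
(exact rows, vertical maps induced by $\pihat$): since $\Z[\FThat]\to\Z[\FT]$ is onto --- the covering $\FThat\to\FT$ is surjective, because a covering restricted to a connected component of its total space surjects onto a connected base and $\FT$ is connected --- the chase identifies $\ker\bigl(\Phat(\C)\to\P(\C)\bigr)$ with the image in $\Phat(\C)$ of $\ker\bigl(\pihat\colon\Z[\Chat]\to\Z[\C\setminus\{0,1\}]\bigr)$. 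That kernel is generated, as an abelian group, by the deck-transformation differences $[\widehat w]-[\sigma\widehat w]$ and $[\widehat w]-[\tau\widehat w]$, where $\sigma(x;2p,2q)=(x;2p+2,2q)$ and $\tau(x;2p,2q)=(x;2p,2q+2)$ generate the $\Z^2$ of deck transformations. In $\Phat(\C)$ one has $[\widehat w]-[\tau\widehat w]=-\{w;2p\}$ and, using the mirror relation~\eqref{MirrorRel}, $[\widehat w]-[\sigma\widehat w]=-\{1-w;-2q\}$, so this image is exactly the subgroup generated by all $\{z;2p\}$ with $z\in\C\setminus\{0,1\}$ and $p\in\Z$. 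By Lemma~\ref{KappaLemma}, $\{z;2p\}=\{z;0\}+p\,\kaphat$, and $\kaphat=\chihat(-1)$; moreover for any $z\in\C\setminus\{0,1\}$ the root $w=\exp\bigl(\tfrac12\Log z\bigr)$ has $\Arg w\in(-\tfrac\pi2,\tfrac\pi2]$ and $w^2+0i=z$, so $\{z;0\}=\chihat(w)\in\im\chihat$. Hence $\ker\bigl(\Phat(\C)\to\P(\C)\bigr)\subset\im\chihat$, which completes the argument.

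\medskip
The step I expect to be the main obstacle is this last one: correctly matching the image of $\ker\pihat$ with the subgroup generated by the symbols $\{z;2p\}$ --- in particular rewriting the $\sigma$-differences as such symbols via~\eqref{MirrorRel} --- and, when taking square roots, keeping track of the points lying over the branch cuts, where the $+0i$ convention and the identifications~\eqref{ChatDef} must be reconciled. The computation $\Lhat\circ\chihat=2\pi i\,\Log$ used above is routine but also requires care with $\Arg z$ near $\pm\tfrac\pi2$ and $\pm\pi$.
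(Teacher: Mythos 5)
Your proposal is correct, and its core ingredients coincide with the paper's proof: the identity $\Lhat\circ\chihat=2\pi i\,\Log$ (the paper's~\eqref{SplitEq}), the splitting $\exp\circ\tfrac{\Lhat}{2\pi i}$, and the reduction of the kernel of $\Phat(\C)\to\P(\C)$ to elements of the form $\{z;2p\}$ via the mirror relation~\eqref{MirrorRel} and Lemma~\ref{KappaLemma}. You deviate in two places, both legitimately. For the homomorphism property, the paper matches the case distinctions in the definition of $\chihat$ directly against the integer corrections of Lemma~\ref{HomoLemma} (plus the $4$-periodicity coming from $\kaphat$), dismissing the exceptional pairs as "easily checked"; you instead quote the mod-$\kaphat$ statement (Neumann's Proposition~8.2, i.e.\ the pullback square the paper sets up before the theorem) to confine the defect to $\<\kaphat\>$ and then kill it with $\Lhat$, since $\Lhat(\kaphat)=-2\pi^2\ne0$ while $\Lhat$ of the defect lies in $\Z(2)$. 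This is a clean way to handle all pairs uniformly, at the cost of leaning on the cited proposition. For the kernel computation, the paper uses Neumann's Lemma~7.1 to write $[z;2p,2q]-[z;0,0]$ explicitly as $pq\,\{z;2\}-(p-1)q\,\{z;0\}+p\,\{1-z;0\}$, whereas you run the diagram chase explicitly: surjectivity of $\Z[\FThat]\to\Z[\FT]$ (covering restricted to a component over the connected base $\FT$), generation of $\ker\bigl(\Z[\Chat]\to\Z[\C\setminus\{0,1\}]\bigr)$ by deck-transformation differences, conversion of the $\sigma$-differences into $\{1-w;-2q\}$ via~\eqref{MirrorRel}, and realization of each $\{z;0\}$ as $\chihat$ of the principal square root. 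Your version makes explicit a step (lifting all five-term relations) that the paper leaves implicit, and your bookkeeping at the branch cuts and at $z=\pm1$ is consistent with the $+0i$ conventions; I see no gap.
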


\begin{proof}
  First of all we note that by the definition of~$\kaphat$
  and Lemma~\ref{KappaLemma},
  \begin{equation*}
    \{z^2;2p\}+\kaphat=\{z^2;2p+2\}=\{z^2;2p-2\}\;,
  \end{equation*}
  which implies that~$\chihat(z)+\chihat(-1)=\chihat(-z)$,
  and that~$\{z^2,2p+4\}=\{z^2;2p\}$.
  By Lemma~\ref{HomoLemma},
  we have~$\chihat(z)+\chihat(w)=\chihat(zw)$
  for almost all choices of~$z$, $w\in\C^\times$.
  The remaining cases are easily checked.

  The right arrow maps~$\{z;2p\}=[z;2p,2]-[z;2p,0]$
  to~$[z]-[z]=0$, so the sequence above is a chain complex.
  To prove injectivity of~$\chihat$
  consider the composition~$\Lhat\circ\chihat$.
  For~$z\in\C^\times$
  let~$p=0$ if~$\Arg z\in\bigl(-\frac\pi2,\frac\pi2\bigr]$
  and~$p=1$ if~$\Arg z\notin\bigl(-\frac\pi2,\frac\pi2\bigr]$.
  Then
	$$\Log(z^2+0i)+2\pi i\,p\equiv 2\Log z\qquad\text{modulo }2\pi i\Z\;,$$
  and
  \begin{align}
    \begin{split}\label{SplitEq}
    (\Lhat\circ\chihat)(z)
    &=\frac12\,\Bigl((\Log z^2+2\pi i\,p)(\Log(1-z^2)+2\pi i)
	-(\Log z^2+2\pi i\,p)\Log(1-z^2)\Bigr)\\
    &=2\pi i\,\Log z\quad\in\quad\C/\Z(2)\;,
    \end{split}
  \end{align}
  and this even holds for~$z=\pm 1$, cf.~\eqref{LhatKaphatFormel},
  hence~$\chihat$ is injective.

  It remains to show that~$\im\chihat=\ker(\Phat(\C)\to\P(\C))$.
  Relations~\eqref{QRel} and~\eqref{PRel} allow to represent each generator
  of~$\Phat(\C)$ as
  \begin{equation*}
    [z;2p,2q]
    =pq\,[z;2,2]-p(q-1)\,[z;2,0]-(p-1)q\,[z;0,2]+(p-1)(q-1)\,[z;0,0]\;,
  \end{equation*}
  see~\cite{N}, Lemma~7.1.
  Using~\eqref{MirrorRel},
  we see that
  the kernel of~$\Phat(\C)\to\P(\C)$ is generated by elements of the form
  \begin{align*}
    [z;2p,2q]-[z;0,0]
    &=pq\,\{z;2\}-(p-1)q\,\{z;0\}+p\,\{1-z;0\}
    \in\im\chihat\;.
  \end{align*}

  By~\eqref{SplitEq},
  a splitting of the sequence is given by the
  homomorphism~$\exp\circ\frac{\Lhat}{2\pi i}\colon\Phat(\Z)\to\C^\times$.
\end{proof}

\begin{Corollary}\label{ExtBlochCor}
  The sequence 
  \begin{equation*}
    \begin{CD}
      0@>>>\Q/\Z@>\alpha\mapsto\chihat(e^{2\pi i\,\alpha})>>\Bhat(\C)
      @>>>\B(\C)@>>>0
    \end{CD}
  \end{equation*}
  is exact,
  and
  \begin{equation*}
    \frac1{(2\pi i)^2}\,\Lhat\bigl(\chihat(e^{2\pi i\,\alpha})\bigr)
    =\alpha\;.
  \end{equation*}
\end{Corollary}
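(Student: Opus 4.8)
The plan is to deduce Corollary~\ref{ExtBlochCor} from Theorem~\ref{ChihatLemma} by restricting the short exact sequence of pre-Bloch groups to the subcomplexes defining the Bloch groups. First I would recall that $\Bhat(\C)$ and $\B(\C)$ are by definition the middle homology of the complexes~\eqref{ExtBlochComplex} and~\eqref{BlochComplex}, and that the forgetful map $\Phat(\C)\to\P(\C)$ is compatible with the maps $\widehat\nu$ and $\nu$ in the sense that $\nu\circ\pihat = (\text{induced map on }\C\wedge\C\to\C^\times\wedge\C^\times)\circ\widehat\nu$ — indeed $\widehat\nu([z;2p,2q])=(\Log z+2\pi i p)\wedge(-\Log(1-z)+2\pi i q)$ maps to $\Log z\wedge(-\Log(1-z)) = \log z\wedge\log((1-z)^{-1})$, which under $\exp\wedge\exp$ is $z\wedge(1-z)^{-1}=\nu([z])$. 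Since $\chihat(z)$ is a combination of symbols $\{w;2p\}=[w;2p,2]-[w;2p,0]$, one checks $\widehat\nu(\chihat(z))=(\Log(z^2)+2\pi i p)\wedge 2\pi i - (\Log(z^2)+2\pi i p)\wedge 0 = 0$ in $\C\wedge\C$, so $\chihat$ lands in $\ker\widehat\nu$, and restricting to $e^{2\pi i\alpha}$ with $\alpha\in\Q/\Z$ is legitimate since $\chihat(e^{2\pi i\alpha})$ is then a torsion element (by exactness of the pre-Bloch sequence, $\ker(\chihat^{-1})\cong$ the subgroup of $\C^\times$ killed, but more directly $e^{2\pi i\alpha}$ for $\alpha\in\Q/\Z$ ranges over the roots of unity $\mu_\infty\cong\Q/\Z$, on which $\chihat$ is injective by Theorem~\ref{ChihatLemma}).

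Next I would run the snake lemma on the commutative diagram whose rows are the two-term complexes $\ker(\widehat\nu)\hookrightarrow\Z[\Chat]/\im\widehat\rho$ restricted appropriately, i.e.\ on
\begin{equation*}
  \begin{CD}
    0@>>>\mu_\infty@>\chihat>>\Phat(\C)@>\pihat_*>>\P(\C)@>>>0\\
    @. @VV{\widehat\nu\circ\chihat}V @VV\widehat\nu V @VV\nu V @.\\
    0@>>>0@>>>\C\wedge_\Z\C@>>>\C^\times\wedge_\Z\C^\times@. \,.
  \end{CD}
\end{equation*}
Since $\widehat\nu\circ\chihat=0$ as computed above, and since $\C\wedge\C\to\C^\times\wedge\C^\times$ is surjective (every pure wedge $u\wedge v$ with $u,v\in\C^\times$ lifts via a choice of logarithm), the snake lemma gives a long exact sequence whose relevant segment reads
\begin{equation*}
  0\longrightarrow\mu_\infty\longrightarrow\ker\widehat\nu\longrightarrow\ker\nu\longrightarrow 0,
\end{equation*}
because the connecting map into $\operatorname{coker}(\widehat\nu\circ\chihat)=0$ vanishes and the cokernels on the right are both zero. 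Here $\ker\widehat\nu=\Bhat(\C)$ and $\ker\nu$ has the Bloch group $\B(\C)$ as a quotient; passing to the homology of the vertical complexes (i.e.\ to the quotients by $\im\widehat\rho$-images already built into $\Phat$ and the images into the middle) and observing that $\im\widehat\rho\subset\ker\widehat\nu$ maps onto $\im\rho\subset\ker\nu$ while meeting $\mu_\infty$ trivially (since $\chihat$ is a section-compatible splitting by Theorem~\ref{ChihatLemma}), one concludes the middle-homology sequence
\begin{equation*}
  0\longrightarrow\mu_\infty\longrightarrow\Bhat(\C)\longrightarrow\B(\C)\longrightarrow 0
\end{equation*}
is exact, which under $\mu_\infty\cong\Q/\Z$ via $\alpha\mapsto e^{2\pi i\alpha}$ is the claimed sequence.

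Finally, for the evaluation of $\Lhat$ I would simply substitute into the formula~\eqref{SplitEq} from the proof of Theorem~\ref{ChihatLemma}, which already gives $(\Lhat\circ\chihat)(z)=2\pi i\,\Log z$ in $\C/\Z(2)$ for every $z\in\C^\times$ (including $z=\pm 1$). Taking $z=e^{2\pi i\alpha}$ with $\alpha\in[0,1)$ a representative, $\Log z = 2\pi i\,\alpha$, hence $\Lhat(\chihat(e^{2\pi i\alpha}))=2\pi i\cdot 2\pi i\,\alpha=(2\pi i)^2\alpha$, and dividing by $(2\pi i)^2$ yields $\alpha$; since changing the representative of $\alpha$ by an integer changes $\Log z$ by $2\pi i\,\Z$ and hence $\Lhat\circ\chihat$ by $(2\pi i)^2\Z=\Z(2)$, the value $\frac{1}{(2\pi i)^2}\Lhat(\chihat(e^{2\pi i\alpha}))$ is well-defined modulo $\Z$ and equals $\alpha\in\Q/\Z$, consistently.

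\textbf{Main obstacle.} The delicate point is the bookkeeping in the snake lemma step: one must be careful that the sequence of \emph{kernels} $0\to\mu_\infty\to\ker\widehat\nu\to\ker\nu\to 0$ descends correctly to a sequence of the \emph{middle homologies} $\Bhat(\C)$ and $\B(\C)$ — i.e.\ that quotienting the kernels by the images $\im\widehat\rho$ and $\im\rho$ preserves exactness. This requires knowing that $\pihat_*$ maps $\im\widehat\rho$ onto $\im\rho$ (clear from the definitions of $\widehat\rho$, $\rho$) and that $\im\widehat\rho\cap\chihat(\mu_\infty)=0$, equivalently that no nontrivial torsion combination of five-term relators equals $\chihat$ of a root of unity. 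This last fact is exactly where the \emph{splitting} in Theorem~\ref{ChihatLemma} does the work: the splitting homomorphism $\exp\circ\frac{\Lhat}{2\pi i}\colon\Phat(\C)\to\C^\times$ restricts to $\chihat$ being a section, and in particular $\chihat$ remains injective after passing to $\Phat(\C)/\im\widehat\rho\hookleftarrow$ nothing is lost; combined with the explicit value $(\Lhat\circ\chihat)(e^{2\pi i\alpha})=(2\pi i)^2\alpha$ being nonzero for $\alpha\notin\Z$, injectivity of $\mu_\infty\to\Bhat(\C)$ follows, and the rest of the exactness is formal.
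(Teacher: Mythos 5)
There is a genuine error at the very first step, and it is fatal to the snake-lemma setup. You claim that $\widehat\nu(\chihat(z))=0$ for every $z\in\C^\times$, but the computation is wrong: from $\widehat\nu([w;2p,2q])=(\Log w+2\pi i\,p)\wedge(-\Log(1-w)+2\pi i\,q)$ one gets
\begin{equation*}
  \widehat\nu\bigl(\{w;2p\}\bigr)=\widehat\nu([w;2p,2])-\widehat\nu([w;2p,0])=(\Log w+2\pi i\,p)\wedge 2\pi i\;,
\end{equation*}
so $\widehat\nu(\chihat(z))=2\Log z\wedge 2\pi i$ (up to a vanishing term $2\pi i\wedge 2\pi i$), which is \emph{not} zero in $\C\wedge_\Z\C$ unless $\Log z\in 2\pi i\,\Q$, i.e.\ unless $z$ is a root of unity (recall $\C\wedge_\Z\C=\C\wedge_\Q\C$ is a $\Q$-vector space, and $a\wedge 2\pi i=0$ iff $a$ and $2\pi i$ are $\Q$-dependent). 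This non-vanishing is precisely the mechanism that cuts the kernel $\C^\times$ of $\Phat(\C)\to\P(\C)$ (Theorem~\ref{ChihatLemma}) down to $\Q/\Z$ in the Bloch-group sequence; if your computation were right, all of $\chihat(\C^\times)$ would lie in $\Bhat(\C)=\ker\widehat\nu$ and the kernel of $\Bhat(\C)\to\B(\C)$ would be $\C^\times$, contradicting the very statement you are proving. Consequently your diagram does not satisfy the hypotheses of the snake lemma: the top row $0\to\mu_\infty\to\Phat(\C)\to\P(\C)\to 0$ is not exact at $\Phat(\C)$ (the kernel there is $\chihat(\C^\times)$, not $\chihat(\mu_\infty)$), and the bottom row $0\to 0\to\C\wedge_\Z\C\to\C^\times\wedge_\Z\C^\times$ is not exact either, since the map induced by $\exp\wedge\exp$ has the nontrivial kernel $\C\wedge 2\pi i$.

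The correct argument runs through exactly the facts your computation erased. Map the exact sequence of Theorem~\ref{ChihatLemma} to the sequence $0\to\ker(\exp\wedge\exp)\to\C\wedge_\Z\C\to\C^\times\wedge_\Z\C^\times$ via the vertical maps $\widehat\nu\circ\chihat$, $\widehat\nu$, $\nu$. Then (i) $\ker(\widehat\nu\circ\chihat)=\{z:2\Log z\wedge2\pi i=0\}=\mu_\infty\cong\Q/\Z$, which gives exactness of $0\to\Q/\Z\to\Bhat(\C)\to\B(\C)$ at the first two spots; and (ii) for surjectivity onto $\B(\C)$ you need $\ker(\exp\wedge\exp)=\C\wedge2\pi i=\im(\widehat\nu\circ\chihat)$ (e.g.\ because $\exp$ induces a surjection of $\Q$-vector spaces $\C\to\C^\times\otimes\Q$ with kernel $2\pi i\,\Q$), so that any lift $\hat b\in\Phat(\C)$ of $b\in\B(\C)$ can be corrected by a suitable $\chihat(z)$ to land in $\ker\widehat\nu=\Bhat(\C)$. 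Your side worries about $\im\widehat\rho$ are unnecessary, since Theorem~\ref{ChihatLemma} is already stated at the level of $\Phat(\C)=\Z[\Chat]/\im\widehat\rho$, and your torsion remark does not substitute for (i). The final evaluation $\frac1{(2\pi i)^2}\Lhat(\chihat(e^{2\pi i\alpha}))=\alpha$ via~\eqref{SplitEq} is correct as you wrote it.
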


\section{The Cheeger-Chern-Simons Class
\texorpdfstring{and~$H_3(\SL(2,\C^\delta))$}{}}\label{CCSSect}
Recall that a map~$\lamhat\colon H_3(\SL(2,\C^\delta))\to\Bhat(\C)$
has been constructed in~\cite{DZ}, Section~3,
without using the transfer relation.
Following~\cite{DZ},
we prove that~$\Lhat\circ\lamhat=\chat_2\in\C/\Z(2)$
and conclude from this that~$\lamhat$ is an isomorphism.

Note that because~$\C/\Z(2)$ is divisible,
there is a canonical isomorphism
\begin{equation*}\label{UCoeffThm}
  H^3\bigl(\SL(2,\C^\delta),\C/\Z(2)\bigr)
  \cong\Hom_\Z\bigl(H_3(\SL(2,\C^\delta)),\C/\Z(2)\bigr)\;.
\end{equation*}
Let~$\chat_2\in H^3(\SL(2,\C^\delta),\C/\Z(2))$ denote
the second Cheeger-Chern-Simons class of the tautological flat complex
vector bundle of rank~$2$ over~$B\SL(2,\C^\delta)$.
Here,
we are using the same normalisation as~\cite{N}.
In~\cite{DZ},
the class~$\frac1{(2\pi i)^2}\,\chat_2\in H^3(\SL(2,\C^\delta),\C/\Z)$
is considered, see~\cite{DZ}, Remark~4.2.

\begin{Theorem}[cf.~\cite{DZ}, Theorem~4.1]\label{MainThm}
  Under the isomorphism above,
	$$\chat_2=\Lhat\circ\lamhat\;.$$
\end{Theorem}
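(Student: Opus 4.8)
The plan is to reduce the statement to its known counterpart modulo $2\pi^2\Z$ from \cite{DZ} and then promote it to $\C/\Z(2)=\C/4\pi^2\Z$ by controlling the discrepancy, which a priori lives in the $2$-torsion subgroup $\frac12\Z(2)/\Z(2)\cong\Z/2$. Concretely, let $\phy=\chat_2-\Lhat\circ\lamhat\in\Hom_\Z(H_3(\SL(2,\C^\delta)),\C/\Z(2))$. Reducing coefficients along $\C/\Z(2)\to\C/2\pi^2\Z$, Theorem~4.1 of \cite{DZ} (which holds because $\lamhat$ was constructed there without the transfer relation, and $\Lhat$ here lifts the $\Lhat$ of \cite{DZ}) tells us that $\phy$ becomes zero. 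Hence $\phy$ factors through the kernel $\frac12\Z(2)/\Z(2)$, i.e. $\phy$ is a homomorphism $H_3(\SL(2,\C^\delta))\to\Z/2$, and it suffices to show this homomorphism vanishes.

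First I would pin down the target more carefully: a homomorphism $H_3(\SL(2,\C^\delta))\to\Z/2$ is determined by its value on the image of $H_3(\SL(2,\C^\delta))$ in $H_3(\SL(2,\C^\delta))\otimes\Z/2$, and by the exact sequence of Corollary~\ref{ExtBlochCor} together with the identification $\Bhat(\C)\cong H_3(\SL(2,\C^\delta))$ (which we are simultaneously establishing via $\lamhat$), the relevant $2$-torsion phenomenon is carried by the class $\chihat(e^{2\pi i\cdot(1/2)})=\chihat(-1)=\kaphat$. So the whole question comes down to the single numerical check: does $\chat_2$ and $\Lhat\circ\lamhat$ agree on the homology class $\lamhat^{-1}(\kaphat)$, equivalently is $\phy(\lamhat^{-1}(\kaphat))=0$? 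By \eqref{LhatKaphatFormel} we have $\Lhat(\kaphat)=-2\pi^2$, which is the nonzero element of $\frac12\Z(2)/\Z(2)$; so the claim is equivalent to $\chat_2$ evaluating to $-2\pi^2\bmod\Z(2)$ on that same class. I would therefore identify a concrete cycle in $H_3(\SL(2,\C^\delta))$ — for instance one built from the central element $-I\in\SL(2,\C)$, since $\kaphat$ is exactly the ``extra'' $\Z/2$ distinguishing $\SL$ from $P\SL$ — and compute $\chat_2$ on it directly from the Cheeger–Chern–Simons construction, using the normalization fixed in \cite{N} and recalled above.

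The key steps, in order: (1) recall from \cite{DZ} that $\lamhat\colon H_3(\SL(2,\C^\delta))\to\Bhat(\C)$ is a well-defined homomorphism, constructed via a simplicial/truncated-cycle model with explicit flattenings, and that $\Lhat\circ\lamhat\equiv\chat_2\pmod{2\pi^2\Z}$; (2) deduce that $\phy:=\chat_2-\Lhat\circ\lamhat$ takes values in $\frac12\Z(2)/\Z(2)\cong\Z/2$; (3) observe via Corollary~\ref{ExtBlochCor} and the five- and cycle-relations of Section~\ref{RelSect} that the $2$-torsion of $\Bhat(\C)$ relevant here is generated by $\kaphat$, so $\phy$ is detected by its value on $\lamhat^{-1}(\kaphat)$; (4) exhibit an explicit $3$-cycle representing $\lamhat^{-1}(\kaphat)$, coming from a loop realizing the central $\Z/4\subset H_3(\SL(2,\C^\delta))\to H_3(P\SL(2,\C^\delta))$, and evaluate the Cheeger–Chern–Simons cocycle on it; (5) check the bookkeeping of constants so that this value is $-2\pi^2\bmod\Z(2)$, matching $\Lhat(\kaphat)$ from \eqref{LhatKaphatFormel}, whence $\phy=0$ and $\chat_2=\Lhat\circ\lamhat$ on the nose.

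The main obstacle is step (4)--(5): one must produce an \emph{honest} chain-level representative tying the algebraic generator $\kaphat$ of $\Phat(\C)$ to an explicit element of group homology — tracking all flattenings $(z;2p,2q)$ and their contributions through the construction of $\lamhat$ in \cite{DZ} — and then evaluate the Cheeger–Chern–Simons transgression form/cocycle on that chain with the \cite{N} normalization, keeping factors of $2\pi$, $i$, and signs exact rather than up to $\Z(2)$. All the surrounding structure (divisibility of $\C/\Z(2)$, the split exact sequences, and the mod-$2\pi^2$ result of \cite{DZ}) is already available; the delicate point is purely the exact value of a single $\Z/2$-valued invariant, and getting its normalization right is where the real work lies.
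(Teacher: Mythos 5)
Your steps (1)--(2) coincide with the paper's starting point: Theorem~4.1 of~\cite{DZ}, together with the fact that the present $\Lhat$ lifts the one of~\cite{DZ}, gives $2\chat_2=2\Lhat\circ\lamhat$ in $\Hom_\Z\bigl(H_3(\SL(2,\C^\delta)),\C/\Z(2)\bigr)$, so the difference $\phy=\chat_2-\Lhat\circ\lamhat$ takes values in the $2$-torsion $\tfrac12\Z(2)/\Z(2)\cong\Z/2$. But your continuation has a genuine gap. A homomorphism $\phy\colon H_3(\SL(2,\C^\delta))\to\Z/2$ factors through $H_3(\SL(2,\C^\delta))\otimes\Z/2$; it is \emph{not} detected by its values on $2$-torsion \emph{elements} of the domain, and your step (3) conflates these two things. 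Worse, the particular class you single out cannot detect anything: $\kaphat=\chihat(-1)=2\,\chihat(i)$ is divisible by $2$ (this is visible in the column $\Z/4\Z\xrightarrow{\chihat(i)\cdot}\Bhat(\C)$ of Remark~\ref{PSLRem}), so $\phy\bigl(\lamhat^{-1}(\kaphat)\bigr)=0$ automatically for \emph{any} $\Z/2$-valued $\phy$, vanishing or not. Hence the single numerical check you propose in steps (4)--(5) --- which you yourself identify as the real work and do not carry out --- would be vacuous and could not close the argument. There is also a circularity worry in invoking $\Bhat(\C)\cong H_3(\SL(2,\C^\delta))$ via $\lamhat$, since Corollary~\ref{MainCor} is deduced from the very theorem being proved.

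The missing idea is much simpler and is exactly how the paper concludes: $H_3(\SL(2,\C^\delta))$ is a divisible abelian group. Given $2\phy=0$, write any class as $x=2y$; then $\phy(x)=2\phy(y)=0$. Equivalently, $H_3(\SL(2,\C^\delta))\otimes\Z/2=0$, so $\Hom\bigl(H_3(\SL(2,\C^\delta)),\Z/2\bigr)=0$ and $\phy$ vanishes identically --- with no chain-level representative, no evaluation of the Cheeger--Chern--Simons cocycle, and no normalization bookkeeping. (Divisibility also explains why your intended detection class had to fail: in a divisible group every element, in particular $\kaphat$, is twice another one.)
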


\begin{proof}
  By Theorem~4.1 in~\cite{DZ},
  we have that
	$$2\chat_2=2\Lhat\circ\lamhat
	\in\Hom_\Z\bigl(H_3(\SL(2,\C^\delta)),\C/\Z(2)\bigr)$$
  in our normalisation.
  Because~$H_3(\SL(2,\C^\delta))$ is divisible,
  this implies our claim.
\end{proof}

\begin{Corollary}[cf.~\cite{DZ}, Theorem~4.15]\label{MainCor}
  The map~$\lamhat\colon H_3(\SL(2,\C^\delta))\to\Bhat(\C)$
  is an isomorphism.
\end{Corollary}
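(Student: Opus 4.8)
The plan is to exhibit $\lamhat$ as a morphism of short exact sequences and then apply the five lemma. By Corollary~\ref{ExtBlochCor} the target fits into the exact sequence $0\to\Q/\Z\to\Bhat(\C)\to\B(\C)\to0$, whose kernel $\Q/\Z$ is the torsion subgroup. On the source side one uses the classical description of $H_3(\SL(2,\C^\delta))$ (see~\cite{DZ} and the references therein): it fits into an exact sequence $0\to\Q/\Z\to H_3(\SL(2,\C^\delta))\to\B(\C)\to0$ whose torsion subgroup $\Q/\Z$ is the group of roots of unity of~$\C$, whose quotient is the uniquely divisible group~$\B(\C)$, and whose surjection is the classical Bloch map. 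Since $\lamhat$ was constructed in~\cite{DZ} as a lift of that map, the composite of $\lamhat$ with $\Bhat(\C)\to\B(\C)$ is the classical map; hence $\lamhat$ carries torsion to torsion and fits into a commutative ladder whose induced map on $\B(\C)$ is the identity and whose induced map $\mu\colon\Q/\Z\to\Q/\Z$ on torsion subgroups is the object of interest. By the five lemma it remains to prove that $\mu$ is an isomorphism.

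To identify~$\mu$ I would combine Theorem~\ref{MainThm} with the second formula of Corollary~\ref{ExtBlochCor}. Applying $\frac1{(2\pi i)^2}\Lhat$ to the identity $\chat_2=\Lhat\circ\lamhat$ and restricting to torsion subgroups, the right-hand side becomes $\mu$ followed by the inclusion $\Q/\Z=\Q(2)/\Z(2)\hookrightarrow\C/\Z(2)$, since $\frac1{(2\pi i)^2}\Lhat(\chihat(e^{2\pi i\alpha}))=\alpha$; the left-hand side is the restriction of $\frac1{(2\pi i)^2}\chat_2$ to the torsion subgroup of $H_3(\SL(2,\C^\delta))$. As $\Q(2)/\Z(2)\hookrightarrow\C/\Z(2)$ is injective, this identifies $\mu$, under $\Q(2)/\Z(2)\cong\Q/\Z$, with the restriction of $\frac1{(2\pi i)^2}\chat_2$ to torsion. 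So everything reduces to showing that $\chat_2$ carries the torsion subgroup of $H_3(\SL(2,\C^\delta))$ isomorphically onto $\Q(2)/\Z(2)$.

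This last point is the heart of the proof. The torsion subgroup is the colimit, over $n\ge1$ ordered by divisibility, of the maps $\Z/n=H_3(\Z/n)\to H_3(\SL(2,\C^\delta))$ induced by $\Z/n\hookrightarrow\SL(2,\C)$, $1\mapsto\diag(\zeta_n,\zeta_n^{-1})$ with $\zeta_n=e^{2\pi i/n}$, and by naturality of $\chat_2$ it is enough to show that $\chat_2$ sends the generator of $H_3(\Z/n)$ to an element of exact order~$n$ in $\Q(2)/\Z(2)$. Over $B\Z/n$ the tautological flat rank-$2$ bundle splits as $L\oplus L^{-1}$ with $L$ the flat line bundle of holonomy~$\zeta_n$, so by the Whitney formula for Cheeger-Chern-Simons characters (see~\cite{CS2}) its class is the product $\chat_1(L)\cdot\chat_1(L^{-1})$. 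Since $\C/\Z(2)$ has torsion subgroup $\Q(2)/\Z(2)\cong\Q/\Z$ and $B\Z/n$ has trivial rational cohomology in positive degrees, $H^3(B\Z/n,\C/\Z(2))\cong H^3(B\Z/n,\Q/\Z)\cong\Z/n$; writing the above product of flat Cheeger-Chern-Simons characters as the cup product of the holonomy class of $L$ in $H^1(B\Z/n,\C/\Z(1))$ with the first Chern class of~$L^{-1}$ in $H^2(B\Z/n,\Z)$, a direct computation in $H^*(B\Z/n)$ (the holonomy class has order~$n$ and its Bockstein generates $H^2(B\Z/n,\Z)$) shows that this product generates $H^3(B\Z/n,\C/\Z(2))$. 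Hence $\chat_2$ has order exactly~$n$ on $H_3(\Z/n)$, and passing to the colimit gives the desired isomorphism onto $\Q(2)/\Z(2)$. I expect this $B\Z/n$ computation to be the only genuinely delicate step.

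Alternatively, one may shorten the argument by invoking the corrected form of~\cite[Theorem~4.15]{DZ}: with the transfer relation included, i.e.\ after dividing by~$\<\kaphat\>$, the map induced by $\lamhat$ is an isomorphism $H_3(\SL(2,\C^\delta))\cong\Bhat(\C)/\<\kaphat\>$. Then $\lamhat$ is automatically injective with cokernel a quotient of $\<\kaphat\>\cong\Z/2$, so it is an isomorphism as soon as $\kaphat\in\im\lamhat$. For this it suffices that $\chat_2$ be non-zero on the $2$-torsion class $\xi_0\in H_3(\SL(2,\C^\delta))$ obtained from the generator of $H_3(\Z/2)$ via $-I\in\SL(2,\C)$: then $\lamhat(\xi_0)$ is a non-zero $2$-torsion element of $\ker(\Bhat(\C)\to\B(\C))=\Q/\Z$, hence equals~$\kaphat$. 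And $\chat_2(\xi_0)$ is the Cheeger-Chern-Simons class of the flat bundle over $B\Z/2$ associated with~$-I$, which is the generator of $H^3(B\Z/2,\C/\Z(2))\cong\Z/2$ --- the $n=2$ case of the computation above --- so it is non-zero. In this variant only that single computation is needed.
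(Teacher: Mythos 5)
Your argument is correct, and its skeleton --- the ladder of the two short exact sequences $0\to\Q/\Z\to H_3(\SL(2,\C^\delta))\to\B(\C)\to0$ (from Dupont--Sah) and $0\to\Q/\Z\to\Bhat(\C)\to\B(\C)\to0$ (Corollary~\ref{ExtBlochCor}), followed by the five lemma --- is exactly the paper's. The difference lies in how the left-hand column is handled: the paper simply quotes \cite{Du2}, Theorem~10.2, which says $\tfrac1{(2\pi i)^2}(\chat_2\circ\iota)(\alpha)=\alpha$, combines it with Theorem~\ref{MainThm} and the formula $\tfrac1{(2\pi i)^2}\Lhat(\chihat(e^{2\pi i\alpha}))=\alpha$ to get commutativity of the left square on the nose, and then applies the five lemma; you instead observe that an arbitrary isomorphism $\mu$ on the kernels suffices and prove that $\chat_2$ is injective on torsion by your own computation: restrict to the cyclic subgroups $\diag(\zeta_n,\zeta_n^{-1})$, split the flat bundle over $B\Z/n$ as $L\oplus L^{-1}$, and use the Whitney/product formula for Cheeger--Chern--Simons characters together with standard cyclic group cohomology. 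That computation is essentially a proof of the Dupont result the paper cites, so you gain self-containedness at the cost of length; note that you do need from \cite{DS} not just the abstract exact sequence but the fact that the torsion of $H_3(\SL(2,\C^\delta))$ is exhausted by the images of these cyclic subgroups --- the same identification that underlies the paper's citation, so this is a fair appeal but worth stating explicitly. Your alternative route is genuinely different and attractive: invoking the corrected \cite[Theorem~4.15]{DZ} (isomorphism onto $\Bhat(\C)/\<\kaphat\>$), the only issue is whether $\kaphat=\chihat(-1)$ lies in the image of $\lamhat$, and the single $n=2$ computation showing $\chat_2\neq0$ on the class coming from $-I$ settles it, since the kernel $\Q/\Z$ has a unique element of order two; this buys a much shorter argument, at the price of relying on the corrected form of the [DZ] result rather than only on the ingredients proved in this paper.
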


\begin{proof}
  Consider the commutative diagram
  \begin{equation*}
    \begin{CD}
      0@>>>\Q/\Z@>\iota>>H_3(\SL(2,\C^\delta))@>\lambda>>\B(\C)@>>>0\\
      &&@|@VV\lamhat V@|\\
      0@>>>\Q/\Z@>\chihat(e^{2\pi i\punkt})>>\Bhat(\C)@>>>\B(\C)@>>>0&\;.
    \end{CD}
  \end{equation*}
  The upper row has been established in~\cite{DS} and is exact.
  The lower row is just Corollary~\ref{ExtBlochCor}.
  Commutativity of the right hand square has been established in~\cite{DZ},
  Section~3.
  
  Let~$\alpha\in\Q$,
  then~$\frac1{(2\pi i)^2}\,(\chat_2\circ\iota)(\alpha)=\alpha$
  by~\cite{Du2}, Theorem~10.2.
  Theorem~\ref{MainThm} implies that
	$$\frac1{(2\pi i)^2}\,\Lhat\bigl((\lambda\circ\iota)(\alpha)\bigr)
	=\frac1{(2\pi i)^2}\,(\chat_2\circ\iota)(\alpha)=\alpha\;,$$
  and~$(\lambda\circ\iota)(\alpha)\in\ker\bigl(\Bhat(\C)\to\B(\C)\bigr)$
  by commutativity of the right hand square.
  On the other hand,
  Corollary~\ref{ExtBlochCor} implies that
	$$\frac1{(2\pi i)^2}\,\Lhat\bigl(\chihat(e^{2\pi i\alpha})\bigr)
	=\alpha\;,$$
  and that~$\Lhat$ is injective on~$\ker\bigl(\Bhat(\C)\to\B(\C)\bigr)$.
  Thus the left hand square also commutes.
  Our claim now follows from the five-lemma.
\end{proof}

\begin{Remark}\label{PSLRem}
  Let $\Bhat_N(\C)\cong H_3(PSL(2,\C))$ denote Neumann's
  extended Bloch group in~\cite{N}.
  Then the diagramme
  \begin{equation*}
    \begin{CD}
      &&0&&0\\
      &&@VVV@VVV\\
      &&\Z/4\Z@=\Z/4\Z\\
      &&@VVV@V\chihat(i)\punkt VV\\
      0@>>>\Q/\Z@>\chihat(e^{2\pi i\punkt})>>\Bhat(\C)@>>>\B(\C)@>>>0\\
      &&@V4\punkt VV@VbVV@|\\
      0@>>>\Q/\Z@>\chi(e^{2\pi i\punkt})>>\Bhat_N(\C)@>>>\B(\C)@>>>0\\
      &&@VVV@VVV\\
      &&0&&0
    \end{CD}
  \end{equation*}
  commutes and has exact rows and columns.
  Here the map~$b\colon\Bhat(\C)\to\Bhat_N(\C)$ sends a generator~$[z,2p,2q]$
  to the same generator in~$\Bhat_N(\C)$,
  and~$\chi$ has been defined in~\cite{N}, Proposition~7.4.
  This is proved in analogy with Corollary~8.3 in~\cite{N}.
  For example,
  commutativity of the lower left hand square follows from
  \begin{equation*}
    (b\circ\chihat)(z)=[z^2;2p,2]-[z^2;2p,0]
    =2\bigl([z^2;2p,1]-[z^2;2p,0]\bigr)=2\chi(z^2)=4\chi(z)
    \in\Phat_N(\C)\;.
  \end{equation*}
  This also shows that~$\ker b$ is spanned by~$\chihat(i)=\{-1;0\}$.
\end{Remark}

\section{More Relations in the Extended pre-Bloch Group}\label{MoreSect}
By~\cite{DS},
one has the relations
	$$[z]=-[1-z]=\biggl[\frac1{1-z}\biggr]
	=-\biggl[-\frac z{1-z}\biggr]=\biggl[1-\frac1z\biggr]
	=-\biggl[\frac1z\biggr]\;.$$
in the pre-Bloch group~$\P(\C)$.
If we interpret~$z$ as the cross-ratio of a generic configuration
of four points in~$\C P^1$,
then these relations say that up to orientation,
the order of the points is not important. 
Note that~$\Im\chat_2$ is already well-defined on~$\B(\C)$.

Similar relations hold in Neumann's extended pre-Bloch group~$\Phat_N(\C)$
by Proposition~13.1 in~\cite{N}.
As a consequence,
unordered oriented simplices are also sufficient to compute~$\Re\chat_2$ up
to some finite ambiguity.
Unfortunately,
these relations become more complicated in~$\Phat(\C)$.
Let~$\root{4\,}\of z$ denote the standard fourth root of~$z$
with~$\Arg\root{4\,}\of z\in\bigl(-\frac\pi4,\frac\pi4\bigr]$.

\begin{Proposition}\label{RelThm}
  Let~$\Im z>0$. Then
  \allowdisplaybreaks
  \begin{align*}
    \biggl[\frac1z;-2p,2p+2q\biggr]
    &=-[z;2p,2q]+\chihat\bigl(i^p\root{4\,}\of z\bigr)\;,\tag1\\
    \biggl[1-\frac1z;-2p-2q;2p\biggr]
    &=[z;2p,2q]-\chihat\Bigl(e^{-\tfrac{\pi i}{12}(1-6p)}
	\root{4\,}\of z\Bigr)\;,\tag2\\
    \biggl[-\frac z{1-z};2p+2q,-2q\biggr]
    &=-[z;2p,2q]+\chihat\Bigl(e^{-\tfrac{\pi i}{12}\,(1+6q)}
	\,\root{4\,}\of{z-1}\Bigr)\;,\tag3\\
    \biggl[\frac1{1-z};2q,-2p-2q\biggr]
    &=[z;2p,2q]-\chihat\Bigl(e^{-\tfrac{\pi i}{12}\,(2+6q)}
	\,\root{4\,}\of{z-1}\Bigr)\;,\tag4\\
    [1-z;-2q,-2p]
    &=-[z;2p,2q]+\chihat\Bigl(e^{\tfrac{\pi i}{12}}\Bigr)\;.\tag5
  \end{align*}
\end{Proposition}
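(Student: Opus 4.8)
\textbf{Proof strategy for Proposition~\ref{RelThm}.}
The plan is to reduce all five identities to the single case~$p=q=0$ together with the already-established relations \eqref{QRel}--\eqref{PQRel} and \eqref{MirrorRel}, and then to verify the base cases by applying~$\Lhat$ and using the injectivity of~$\chihat$ from Theorem~\ref{ChihatLemma}. Concretely, each relation has the shape~$[\phy(z);\mathbf{p}',\mathbf{q}']+[z;2p,2q]=\pm\chihat(\psi(z,p,q))$, where~$\phy$ is one of the six anharmonic-group maps and the right-hand side lies in~$\ker(\Phat(\C)\to\P(\C))$ by Theorem~\ref{ChihatLemma}, since in~$\P(\C)$ we already have~$[\phy(z)]=\pm[z]$. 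So both sides project to~$0$ in~$\P(\C)$, and it suffices to check equality after applying the injective-on-the-kernel map~$\Lhat$, i.e.\ to verify a numerical identity in~$\C/\Z(2)$ involving~$\overline L$ evaluated at the relevant lifts.

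First I would fix~$\Im z>0$ so that all of~$z$, $\tfrac1z$, $1-\tfrac1z$, $-\tfrac z{1-z}$, $\tfrac1{1-z}$, $1-z$ have a definite sign of imaginary part and the flags~$(z_0;0,0),\dots$ lie in~$\FThat^+$; this pins down the integer shifts~$\mathbf p',\mathbf q'$ displayed in each line via \eqref{FThatEq}. Then I would treat the case~$p=q=0$ of each of (1)--(5) by writing out~$\overline L(\phy(z);\mathbf p',\mathbf q')+\overline L(z;0,0)$ and comparing with~$\overline L$ of the stated~$\chihat$-argument computed through \eqref{SplitEq} (recall~$(\Lhat\circ\chihat)(w)=2\pi i\,\Log w$). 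Here the classical Rogers five-term and inversion/reflection identities for~$L$ — e.g.\ $L(z)+L(1/z)=-\pi^2/6$ and $L(z)+L(1-z)=\pi^2/6$ up to the logarithmic correction terms — provide the function-theoretic input; the role of the fourth root and the exponential prefactors~$e^{-\pi i(1\pm 6p)/12}$ etc.\ is precisely to absorb the quadratic-in-$\Log$ cross terms and the constant~$\pi^2/6$ contributions modulo~$\Z(2)$. Relation (5) is the cleanest: it is essentially \eqref{MirrorRel} rewritten, since $[1-z;-2q,-2p]=-[z;2p,2q]+2[\tfrac12;0,0]$ and one checks $2[\tfrac12;0,0]=\chihat(e^{\pi i/12})$ by evaluating~$\Lhat$ on both sides, using $L(\tfrac12)=-\pi^2/12$.

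To pass from~$p=q=0$ to general~$p,q\in\Z$ I would use \eqref{QRel}--\eqref{PQRel} on the left-hand term~$[\phy(z);\mathbf p',\mathbf q']$ — note that under each~$\phy$ the pair~$(\mathbf p',\mathbf q')$ is an affine-integral function of~$(p,q)$ whose coordinate-sum behaviour matches one of \eqref{PRel}, \eqref{QRel}, \eqref{PQRel} — and on the right-hand side use the additivity~$\chihat(vw)=\chihat(v)+\chihat(w)$ (with the~$\Arg$-corrections of Lemma~\ref{HomoLemma}) to peel off factors of~$i=\chihat$'s behaviour under multiplication by~$i$, i.e.\ $\chihat(i^{p}\,\cdot)=\chihat(\cdot)+p\,\kaphat$ since $\chihat(i)=\{-1;0\}$ and $\chihat(-1)=\kaphat$. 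The bookkeeping of how many~$\kaphat$'s appear, and the matching of the~$\Arg$-jump cases in Lemma~\ref{HomoLemma} against the integer jumps in \eqref{CycleRel}, is where the argument is most delicate: one must confirm that the discrete corrections on the two sides cancel for \emph{every} $(p,q)$, not just generically. That case analysis — rather than any single computation — is the main obstacle, and I would organise it by first doing $(p,q)\mapsto(p+1,q)$ and $(p,q)\mapsto(p,q+1)$ as two elementary induction steps, checking in each that exactly the claimed prefactor change on the~$\chihat$-argument is produced, modulo~$\kaphat$, and then observing that the resulting ambiguity in~$\kaphat$ is killed because both sides already agree after~$\Lhat$, where $\Lhat(\kaphat)=-2\pi^2\neq 0$ forces the~$\kaphat$-count to be correct rather than merely correct mod~$2$.
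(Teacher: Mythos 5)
Your key reduction is exactly the one the paper uses: by \cite{DS} the difference of the two sides of each identity lies in $\ker\bigl(\Phat(\C)\to\P(\C)\bigr)=\im\chihat$, and $\Lhat$ is injective on $\im\chihat$ by \eqref{SplitEq}, so it suffices to verify each identity after applying $\Lhat$, which is then an elementary computation with classical dilogarithm identities. (The paper additionally shortens the list: the involutions $[z;2p,2q]\mapsto[1-z;-2q,-2p]$ and $[z;2p,2q]\mapsto[1/z;-2p,2p+2q]$ generate an $S(3)$-action, so (2)--(4) follow formally from (1) and (5); your treatment of (5) via \eqref{MirrorRel} and $L(\tfrac12)=-\pi^2/12$ is correct and in the same spirit.)

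The genuine problem is your passage from $p=q=0$ to general $(p,q)$. First, the bookkeeping identity you invoke, $\chihat(i^p\,\cdot)=\chihat(\cdot)+p\,\kaphat$, is false: since $\chihat$ is a homomorphism, the correction per unit of $p$ is $\chihat(i)=\{-1;0\}$, an element of order $4$ whose double is $\kaphat$, not $\kaphat$ itself; an induction tracking $\kaphat$ per step would therefore not close up. Second, your proposed resolution of the residual $\kaphat$-ambiguity (``both sides already agree after $\Lhat$'') is circular: at general $(p,q)$ that agreement is not yet known unless you compute $\Lhat$ of both sides there --- and if you do, the induction is superfluous, because your own reduction shows that $\Lhat$-agreement at a given $(p,q)$ already yields the identity at that $(p,q)$. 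The repair is simply to drop the induction: the $(p,q)$-dependence of $\Lhat$ of both sides is completely explicit (polynomial in $p,q$ through \eqref{LhatDef}, and $2\pi i\,\Log$ of the stated $\chihat$-argument through \eqref{SplitEq}), so the verification after applying $\Lhat$ can and should be carried out for all $(p,q)$ at once, which is what the paper does for relations (1) and (5) before deducing the rest via the $S(3)$-action.
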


\begin{proof}
  The involutions
	$$[z;2p,2q]\mapsto[1-z;-2q,-2p]
		\qquad\text{and}\qquad
	[z;2p,2q]\mapsto\biggl[\frac 1z;-2p;2p+2q\biggr]$$
  generate an action of~$S(3)$ on~$\Chat$.
  Using~$\Im\bigl(1-\frac1z\bigr)>0$ and~$\Im\frac1{1-z}>0$,
  it is now easy to see that the five relations above follow from~(1) and~(5).
  Note that by~\cite{DS},
  both relations are true
  modulo~$\ker\bigl(\Phat(\C)\to\P(\C)\bigr)=\im\chihat$.
  Because the Rogers dilogarithm~$\Lhat$ is injective
  on~$\im\chihat$ by~\eqref{SplitEq},
  it suffices to check both relations after applying~$\Lhat$.
  This can be done using some elementary facts about the classical
  dilogarithm, and is thus left to the reader.
%
\end{proof}

\begin{Remark}\label{NeumannRelRemark}
  The relations in~\cite{N} look somewhat nicer,
  since the correction term does not involve the variable~$z$.
  This is possible because in Neumann's definition of~$\Phat_N(\C)$,
  odd integers are allowed,
  so that one can consider the involution~$[z;p,q]\mapsto[1/z;-p,1+p+q]$.

  Following~\cite{N},
  the various terms on the left hand side of the equations
  in Proposition~\ref{RelThm} correspond to flattenings of a given
  oriented simplex with different orderings of vertices.
  Thus the proposition seems to indicate that it is not enough
  to consider unordered oriented simplices
  if one wants to compute the full class~$\chat_2$.

  For higher classes~$\chat_k$,
  not many formulas are available.
  The only formula known to the authors is the formula for~$\Re\chat_3$
  in~\cite{Goncharov},
  which uses unordered oriented simplices.
\end{Remark}


\end{document}